\newtheorem{thm}{Theorem}[section]
\newtheorem{cor}[thm]{Corollary}
\newtheorem{lem}[thm]{Lemma}
\newtheorem{ex}[thm]{Example}
\newtheorem{defn}[thm]{Definition}
\newtheorem{rem}[thm]{Remark}
\numberwithin{equation}{section}
\providecommand{\ds}{\displaystyle}
\providecommand{\dss}{\displaystyle \sum}
\providecommand{\dsf}{\displaystyle \frac}
\providecommand{\dsm}{\displaystyle \min}
\providecommand{\norm}[1]{\lvert#1\rvert}
\providecommand{\stb}[1]{\left\{#1\right\}}
\providecommand{\opint}[1]{\left]#1\right[}
\providecommand{\clint}[1]{\left[#1\right]}
\providecommand{\innp}[1]{\langle#1\rangle}
\providecommand{\ra}{\rightarrow}
\providecommand{\RA}{\Rightarrow}
\providecommand{\grad}{\nabla}
\providecommand{\eps}{\epsilon}
\providecommand{\om}{\omega}
\providecommand{\lam}{\lambda}
\providecommand{\bx}{\bar{x}}
\providecommand{\bk}[1]{\left(#1\right)}
  \providecommand{\conv}{\operatorname{conv}}     
\providecommand{\pt}{\partial}
\providecommand{\probi}{\text{Test~Problem 1}}
\providecommand{\probii}{\text{Test~Problem 2}}
\providecommand{\probiii}{\text{Test~Problem 3}}
\providecommand{\tuple}{\bk{\spy{0},\spy{1},\ldots,\spy{m}}}
\providecommand{\matt}{\RR^{m\times(m+1)}}
\providecommand{\fopt}{f_{\rm opt}}
\providecommand{\xopt}{x_{\rm opt}}
\newcommand{\ay}[3][y]{\ensuremath{{#1}_{#3,#2}}}
\newcommand{\spy}[2][y]{\ensuremath{{#1}_{#2}}}
\newcommand{\sby}[2][y]{\ensuremath{{#1}_{#2}}}
\newcommand{\Lf}[2][L]{\ensuremath{{#1}_{#2}}}
\newcommand{\Kf}[2][K]{\ensuremath{{#1}_{#2}}}
\newcommand{\DFOCM}{$\text{DFO}_\eps\text{CM}$}
\providecommand{\RR}{\mathbb{R}}
\providecommand{\NN}{\mathbb{N}}
\providecommand{\C}{\mathcal{C}}
\newcommand{\Cr}[2][S]{\ensuremath{{#1}_{#2}}}
\DeclareMathOperator*{\amin}{argmin}
\DeclareMathOperator*{\amax}{argmax}
\newcommand{\sepp}{\setlength{\itemsep}{-3pt}}
\title{A Derivative-Free CoMirror Algorithm}
\author{Heinz H. Bauschke\thanks{Mathematics. Irving K. Barber school, University of British Columbia,  Kelowna, B.C. V1V 1V7, Canada. {\tt Heinz.Bauschke@ubc.ca}. }, 
Warren L. Hare\thanks{Mathematics. Irving K. Barber school, University of British Columbia,  Kelowna, B.C. V1V 1V7, Canada. {\tt Warren.Hare@ubc.ca}. }, 
Walaa M.  Moursi\thanks{Mathematics. Irving K. Barber school, University of British Columbia,  Kelowna, B.C. V1V 1V7, Canada. {\tt Walaa.Moursi@ubc.ca}. 
}}
\date{October 23, 2012} 
\begin{document}

\maketitle

\begin{abstract} 
\noindent
We consider $\min\{f(x):g(x) \le 0, ~x\in X\},$
where $X$ is a compact convex subset of $\RR^m$, and $f$ and $g$
are continuous convex functions defined on an open neighbourhood
of $X$.  We work in the setting of derivative-free optimization,
assuming that $f$ and $g$ are available through a black-box that
provides only function values for a lower-$\mathcal{C}^2$ representation
of the functions.  We present a derivative-free
optimization variant of the $\eps$-comirror algorithm \cite{BBTGBT2010}.
Algorithmic convergence hinges on the ability to accurately approximate
subgradients of lower-$\mathcal{C}^2$ functions, which we prove
is possible through linear interpolation. We provide convergence
analysis that  quantifies the difference between the function values
of the iterates and the optimal function value.  We find that the
DFO algorithm we develop has the same convergence result as the
original gradient-based algorithm.  We present some 
numerical testing that demonstrate the practical feasibility of the
algorithm, and conclude with some directions for further research.
\end{abstract}

\noindent {\bf Keywords:} convex optimization, derivative-free optimization, lower-$\mathcal{C}^2$, approximate subgradient, Non-Euclidean projected subgradient, Bregman distance.

\noindent {\bf 2010 Mathematics Subject Classification:} 
Primary 90C25, 90C56; Secondary 49M30, 65K10.  
\section{Introduction}

In this paper we introduce a derivative-free linear interpolation-based method for solving constrained optimization problems of the form
\begin{equation}
(P):\min\{f(x):g(x) \le 0, x\in X\},
\end{equation}
where $f$ and $g$ are continuous convex functions defined on a
nonempty open convex subset $O$ of $\RR^m$, and where the constraint
set $X $ is a nonempty compact convex subset of $O$.  We further
assume that we have access to the lower-$\C^{2}$ 
representations of $f$ and $g$ and that
the problem is feasible i.e., there exists some $x_0\in X$ such
that $g(x_0)\le 0$. 
The algorithm is based on the $\eps$-comirror
algorithm presented in \cite{BBTGBT2010}.  Derivative-free optimization
(DFO) is a rapidly growing field of research that explores the
minimization of a black-box function when first-order information
(derivatives, gradients, or subgradients) is unavailable.  While
the majority of past work in DFO has focused on unconstrained
optimization, several methods have recently been introduced for constrained
optimization.  In constrained optimization, most of the analysis
of DFO methods has been done within the framework of {\em direct
search} and {\em pattern search} methods.  That is, methods that
do not attempt to build interpolation (or other such) models of the
objective function, but instead use concepts like positive bases
to ensure convergence.  Such methods can be adapted to constrained
optimization through techniques by e.g. projecting search directions
onto constraint sets \cite{LST02, hare-2010}, ``pulling back''
search directions onto manifolds \cite{Dreisigmeyer06a, Dreisigmeyer06b},
the use of filtering techniques \cite{AAD2007}, or barrier based
penalties \cite{Audet-Dennis-2006}.

On the other hand, fairly little research has explored approaching constrained
optimization via model-based DFO methods.  
Notable in this area is
\cite{Berghen04,Vanden-Berghen-Bersini-2005}, which extends the UOBYQA 
\cite{Powell02} to constrained optimization (in an algorithm
named CONDOR).  
This paper provides a novel model-based DFO method
for linearly constrained optimization.  
Our algorithm is
designed for constraints defined by a given convex function.

Our algorithm is based on the $\eps$-comirror algorithm 
\cite{BBTGBT2010}.  
The $\eps$-comirror algorithm finds its roots in mirror-descent methods \cite{Nemirovsky-Yudin-1983, Ben-Tal-M-N-2001, BeckTeboulle-2003}. These methods can be viewed as nonlinear projected subgradient methods that use a general distance-like function (the Bregman distance) instead of the usual Euclidean squared distance \cite{BeckTeboulle-2003}.  The $\eps$-comirror algorithm adapts the mirror-descent method to work for convex constrained optimization where the constraint set is provided by a convex function.  It requires that the problem is additionally constrained by a convex compact set and that the subgradients (of both the constraint function and the objective function) are bounded over this set.  

The algorithm presented here differs from previous research in two
other notable ways.  First, unlike past model-based DFO method, we
do not assume that the objective function is $\C^2$; instead, we
work with the broader class of lower-$\C^2$ functions  (see
definition~\ref{lowerc1}).  Lower-$\C^2$ functions include convex
\cite[Theorem~10.33]{rockafellar-wets-1998} and $\C^2$ functions
(by definition), as well as fully amenable functions
\cite[Exercise~10.36]{rockafellar-wets-1998} and finite max functions
(Example~\ref{ex:finitemax} below).  To work with lower-$\C^2$
functions, we develop a method to approximate subgradients for such
functions and analyze it for the derivative-free algorithm.  In
particular, in Theorem~\ref{errbdd} we define the approximate
subgradient for an arbitrary lower-$\C^2$ function and prove that
it satisfies an error bound analogous to the one introduced in
\cite[Theorem 2.11]{conn-scheinberg-vicente-2009} for the class of
$\C^1$ functions.

The second major difference from previous DFO research is that we
present a convergence result that quantifies the difference between the
function values of the iterates and the optimal function value.  
To the best of our knowledge, 
this provides the first results of this kind for a multivariable DFO method. 
It is remarkable that the DFO algorithm we develop has the same
convergence result as the original gradient-based algorithm presented
in \cite{BBTGBT2010}.
(A quadratically convergent DFO
method is developed in \cite{GhoshHager-1990}, but only for
functions defined on $\RR$.
Furthermore, in \cite{Mifflin}, a superlinearly convergent algorithm is
presented.) 

The remainder of this paper is organized as follows. Section~\ref{s:2}
is a brief introduction to the main building blocks we use. First,
we provide the definition of the class of lower-$\C^2$ functions
and some properties. Second, we provide the definition of the linear
interpolation model of a function $f$ over a subset $Y$ of $\RR^m$
and a sufficient condition to be well-defined. Finally, we give the
definition and the main properties of Bregman distances. In
Section~\ref{s:3} we give the first key result in Theorem~\ref{errbdd},
on which we build our convergence results. In Section~\ref{s:4} we
describe our derivative-free $\eps-$comirror algorithm. In
Theorem~\ref{t:conv} we establish the convergence analysis. In
Section~\ref{s:5} we provide some numerical results
that confirm the practical feasibility of the algorithm.
Section~\ref{s:conc} summarizes some concluding remarks.  To make
the presentation self-contained we add Appendix~\ref{s:app} which 
includes the proofs of two basic inequalities. 

\section{Auxiliary Results}\label{s:2}
We shall work in $\RR^m$, equipped with the usual 
Euclidean norm $\norm{\cdot}$. 
Throughout the remainder of the paper, we suppose that 
\begin{center}
$O$ is a nonempty open convex subset of $\RR^m$.
\end{center}
Recall that for a convex function $f:O\to \RR$, the subdifferential $\pt f$ at a point $x\in O$ is defined by
 \begin{equation}\label{subdiffer}
 \pt f (x)=\stb{v\in \RR^m: f(y)\ge f(x)+\innp{v,y-x} \text{    for all   } y\in O}.
 \end{equation}
We denote the {\em closed} ball in $\RR^m$ centred at $x_0$ with radius
$\Delta>0 $ by 
 $$B(x_0;\Delta)=\stb{x\in \RR^m:\norm{x-x_0}\le \Delta},$$ 
and the set of \emph{natural numbers} by 
\begin{equation*}
\NN=\stb{1,2,3,\dots}.
\end{equation*}
Given $r\in\NN$, we abbreviate the \emph{unit simplex} in $\RR^{r}$ by 
\begin{equation*}
\Cr{r}:=\Big\{\lam\in \RR^{r}: \sum_{i=1}^{r}\lam_i=1,\lam_i\in \clint{0,1},
i\in\{1,\ldots,r\}\Big\}.
\end{equation*}
Finally, we shall use $\norm{L}$ to denote the 
spectral norm of a matrix $L\in\RR^{m\times m}$.

\subsection*{The Class of Lower-$\C^k$ Functions}
We next introduce the class of lower-$\C^2$ functions.  

 \begin{defn}\label{lowerc1}~{\rm \cite[Definition 10.29]{rockafellar-wets-1998}}
 A function $f:O\to \RR$ is said to be a \emph {lower-$\C^k$ function} at $\bx\in O$ if there exists a neighbourhood $V=V(\bx)\subseteq O$ and a representation  
\begin{equation}\label{e:lc2}
f(x)=\ds\max_{t\in T} f_t(x) 
\end{equation}
in which all functions $f_t$ are of class $\C^{k}$ on $V$, the index set $T:=T(\bx)$ is a compact topological space, and $f_t$ and the first $k$ derivatives of $f_t$ depend continuously not just on $x\in V$ but even on $(t,x)\in T\times V$. In this case we say that \eqref{e:lc2} provides a {\em lower-$\C^k$ representation} of $f$ at $x\in O$. The function $f$ is said to be \emph{lower-$\C^{k}$ on $O$} if $f$ is lower-$\C^{k}$ at every point $x\in O$. \end{defn}

The next Lemma provides details regarding when a convex function is lower-$\C^2$. 

\begin{lem}\label{con_lc2}{\rm \cite[Theorem 10.33]{rockafellar-wets-1998}}
 Let $f:O\to \RR$ be convex. Then $f$ is lower-$\C^{2}$ on $O$.
\end{lem}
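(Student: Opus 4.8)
The plan is to prove Lemma~\ref{con_lc2} by producing, at each point $\bx\in O$, an explicit lower-$\C^2$ representation of the convex function $f$ in the sense of Definition~\ref{lowerc1}. The natural candidate is to write $f$ near $\bx$ as a supremum of simple $\C^2$ functions — concretely, of quadratics of the form $x\mapsto \innp{v,x-\bx}+f(\bx)+\tfrac{c}{2}\norm{x-\bx}^2$ subtracted off appropriately, or of affine minorants regularized by a small negative quadratic so that each piece is genuinely $\C^2$ and the max is attained over a compact index set.

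First I would fix $\bx\in O$ and choose $\Delta>0$ with $B(\bx;2\Delta)\subseteq O$. Since $f$ is convex and finite on the open set $O$, it is locally Lipschitz; let $\ell$ be a Lipschitz constant for $f$ on $B(\bx;2\Delta)$, so that $\pt f(x)\subseteq B(0;\ell)$ for every $x\in B(\bx;\Delta)$. The key geometric fact is that for a convex function, at each $y\in B(\bx;\Delta)$ and each $v\in\pt f(y)$ the affine function $a_{y,v}(x):=f(y)+\innp{v,x-y}$ is a global minorant of $f$, and $f(x)=\sup\{a_{y,v}(x): y\in B(\bx;\Delta),\ v\in\pt f(y)\}$ for $x$ in the interior of this ball (the sup being attained at $y=x$). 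I would then index these affine pieces by the compact set $T:=B(\bx;\Delta)\times B(0;\ell)$, setting $f_t(x):=f(t_1)+\innp{t_2,x-t_1}$ for $t=(t_1,t_2)\in T$. Each $f_t$ is affine, hence $\C^2$, with derivatives depending continuously (indeed smoothly) on $(t,x)$. This is already almost a lower-$\C^2$ representation, but the subtlety is that not every $t\in T$ gives a true minorant (only those with $t_2\in\pt f(t_1)$ do), so the plain maximum over all of $T$ overshoots $f$. I would handle this by a penalization: replace $f_t$ with $\tilde f_t(x):=f_t(x)-\rho\,\operatorname{dist}^2\big((t_1,t_2),\operatorname{gph}\pt f\big)$ for a suitable constant $\rho$, or — cleaner — restrict the index set to the compact set $T:=\{(y,v):y\in B(\bx;\Delta),\ v\in\pt f(y)\}$, using that $\pt f$ has closed graph and locally bounded values so this set is indeed compact, and that $f_t$ and its derivatives extend continuously to it.

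The main obstacle I anticipate is the regularity of the index family: Definition~\ref{lowerc1} demands that $T$ be a compact topological space and that $f_t$ together with its first and second derivatives depend continuously on $(t,x)\in T\times V$, so I must verify (i) compactness of the chosen $T\subseteq\RR^m\times\RR^m$, which follows from outer semicontinuity and local boundedness of the subdifferential of a finite convex function on an open set; (ii) that the maximum in \eqref{e:lc2} is attained and equals $f(x)$ on a neighbourhood $V$ of $\bx$ — attainment at $t=(x,v)$ with $v\in\pt f(x)$ gives ``$\ge$'', while each $f_t$ being a global affine minorant of $f$ gives ``$\le$''; and (iii) joint continuity of $(t,x)\mapsto f_t(x)$ and of its $x$-derivatives, which is immediate here since $\nabla_x f_t(x)=t_2$ and $\nabla^2_x f_t(x)=0$ depend continuously (trivially) on $(t,x)$. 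Since the affine pieces have vanishing Hessians, the representation is automatically lower-$\C^2$ (in fact lower-$\C^k$ for every $k$), and the Lemma follows once these three verifications are assembled; I would then note that the statement is local at each $\bx$, so ``lower-$\C^2$ on $O$'' follows by ranging $\bx$ over $O$.
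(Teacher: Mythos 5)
Your proof is correct, but note that the paper does not actually prove this lemma: it is stated purely as a citation of \cite[Theorem~10.33]{rockafellar-wets-1998}, so there is no internal argument to compare against; what you supply is, in essence, the standard proof of the relevant implication of that cited theorem. The clean variant you settle on is the right one. Fix $\bx$, take $V=\intr B(\bx;\Delta)$ with $B(\bx;2\Delta)\subseteq O$, and index the supporting affine functions by $T=\{(y,v):y\in B(\bx;\Delta),\ v\in\pt f(y)\}$; this $T$ is compact because the graph of $\pt f$ is closed relative to $O\times\RR^m$ (pass to the limit in the subgradient inequality using continuity of $f$) and the subgradients over $B(\bx;\Delta)$ are bounded by a local Lipschitz constant $\ell$. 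Each piece $f_{(y,v)}(x)=f(y)+\innp{v,x-y}$ is a global minorant of $f$ with equality at $x=y$, and $\pt f(x)\neq\varnothing$ for every $x\in O$, so $f=\max_{t\in T}f_t$ on $V$ with the maximum attained; since $\grad_x f_t(x)=v$ and $\grad^2_x f_t(x)=0$, the joint continuity requirements of Definition~\ref{lowerc1} hold trivially, and $f$ is in fact lower-$\C^k$ for every $k$ at $\bx$. You are also right to abandon the penalization variant you float first (indexing by the full product $B(\bx;\Delta)\times B(0;\ell)$ and subtracting $\rho\,\operatorname{dist}^2\bigl((t_1,t_2),\operatorname{gph}\pt f\bigr)$): the squared distance vanishes to second order near the graph while the overshoot of a non-supporting affine piece is of first order in $\norm{t_2-v}$, so no fixed $\rho$ turns those pieces into minorants; restricting the index set is not merely cleaner but necessary.
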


Although the class of lower-$\C^2$ functions includes many convex
functions \cite[Theorem~10.33]{rockafellar-wets-1998}, it should
be noted that our algorithm will require access to a lower-$\C^2$
representation of the objective and constraint functions.  The next
example shows that any finite max function is not only lower-$\C^2$,
but also provides a natural lower-$\C^2$ representation.

\begin{ex}\label{ex:finitemax}
 Let $f:O\to \RR$ be defined as $f=\max \stb{f_1,\dots,f_n}$, where each $f_i$ is of class $\C^k$ on $O$. Then $f$ is lower-$\C^k$ on $O$. (This is the case where $T$ is $\stb{1,\ldots,n} $ equipped with the discrete topology.)
\end{ex}

The value of working with lower-$\C^2$ functions is seen in Lemma~\ref{subgd}, which demonstrates how to compute the subdifferential of a lower-$\C^2$ function.  
	\begin{lem}\label{subgd}
 Let $f:O\to \RR$ be a convex function that has a lower-$\C^2$ representation $f(x)=\ds\max_{t\in T} f_t(x) $ at $\bar{x}\in O$ and set $A(\bar{x})=\ds\amax_{t\in T} f_t(\bar{x}) $. Then 
 \begin{equation*}
\pt f(\bar{x})=\conv\stb{\grad f_t (\bar{x})|~t\in A(\bar{x})}.
\end{equation*}
\end{lem}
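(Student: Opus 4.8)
The plan is to prove the two inclusions $\supseteq$ and $\subseteq$ separately, using the local max-representation $f = \max_{t\in T} f_t$ valid on a neighbourhood $V$ of $\bar x$, together with convexity of $f$.

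For the inclusion $\supseteq$, I would first observe that for each fixed $t \in A(\bar x)$ we have $f_t(\bar x) = f(\bar x)$ and $f_t \le f$ on $V$. Since $f_t$ is $\C^2$ (hence $\C^1$), the first-order Taylor expansion gives, for $y$ near $\bar x$, $f(y) \ge f_t(y) = f_t(\bar x) + \langle \grad f_t(\bar x), y - \bar x\rangle + o(\|y-\bar x\|)$. Combining this with convexity of $f$ (which lets one pass from a local subgradient inequality to the global one in \eqref{subdiffer}; alternatively invoke that for convex $f$ the subdifferential is determined by local behaviour), one concludes $\grad f_t(\bar x) \in \pt f(\bar x)$. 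Since $\pt f(\bar x)$ is convex, it contains $\conv\{\grad f_t(\bar x): t\in A(\bar x)\}$.

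For the inclusion $\subseteq$, the natural route is to compute the directional derivative $f'(\bar x; d)$ and use the standard fact that for a convex function $\pt f(\bar x) = \{v : \langle v, d\rangle \le f'(\bar x; d) \text{ for all } d\}$, i.e.\ $f'(\bar x;\cdot)$ is the support function of $\pt f(\bar x)$. One shows $f'(\bar x; d) = \max_{t\in A(\bar x)} \langle \grad f_t(\bar x), d\rangle$: the $\ge$ direction follows since $f \ge f_t$ with equality at $\bar x$ for $t\in A(\bar x)$, and the $\le$ direction uses a compactness/continuity argument — for $y = \bar x + \tau d$ with $\tau \downarrow 0$, the active index $t(\tau)$ achieving the max for $f(y)$ clusters (by compactness of $T$ and joint continuity of $(t,x)\mapsto f_t(x)$) at some $t_0 \in A(\bar x)$, and joint continuity of the derivatives passes the estimate to the limit. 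The right-hand side $\max_{t\in A(\bar x)}\langle\grad f_t(\bar x),d\rangle$ is precisely the support function of the compact convex set $\conv\{\grad f_t(\bar x): t\in A(\bar x)\}$ (here one uses that $A(\bar x)$ is compact — as the preimage of $\max$ under the continuous map $t\mapsto f_t(\bar x)$ — and $t\mapsto \grad f_t(\bar x)$ is continuous, so the image is compact and its convex hull is compact). Since two closed convex sets with the same support function coincide, we get $\pt f(\bar x) \subseteq \conv\{\grad f_t(\bar x): t\in A(\bar x)\}$, completing the proof.

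The main obstacle is the $\le$ estimate for the directional derivative, i.e.\ controlling the difference quotient $\tfrac{f(\bar x + \tau d) - f(\bar x)}{\tau}$ from above by $\max_{t\in A(\bar x)}\langle \grad f_t(\bar x), d\rangle$. This is where the full strength of the lower-$\C^2$ hypothesis — joint continuity of $f_t$ \emph{and} of the first derivatives on $T\times V$, together with compactness of $T$ — is essential: one needs a uniform (in $t$) first-order estimate near $\bar x$ so that indices that are only ``nearly active'' do not contribute more than active ones in the limit. All other steps are routine consequences of convex analysis (support functions, the local-to-global property of convex subgradients) and elementary Taylor expansion.
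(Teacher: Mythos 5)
Your argument is correct, but it takes a genuinely different route from the paper: the paper's entire proof is the one--line citation ``combine \cite[Theorem~10.31]{rockafellar-wets-1998} and \cite[Proposition~8.12]{rockafellar-wets-1998}'', i.e.\ it invokes the known formula for the (regular) subdifferential of a lower-$\C^1$ max-representation together with the fact that for convex functions this subdifferential coincides with the convex-analytic one in \eqref{subdiffer}. What you do instead is reprove the relevant content of those citations. For ``$\supseteq$'' you use the first-order expansion of an active $f_t$; note that the $o(\|y-\bar x\|)$ term is not yet a subgradient inequality, and it is the monotonicity of convex difference quotients (equivalently, $\hat\pt f=\pt f$ for convex $f$) that absorbs it, together with the local-to-global property of convex subgradients on the open convex set $O$. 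For ``$\subseteq$'' you establish the Danskin-type identity $f'(\bar x;d)=\max_{t\in A(\bar x)}\innp{\grad f_t(\bar x),d}$ and compare support functions; the delicate upper estimate --- choosing indices $t(\tau)$ active at $\bar x+\tau d$, extracting a cluster point $t_0\in A(\bar x)$ by compactness of $T$, and passing to the limit via the mean value theorem and joint continuity of $(t,x)\mapsto\grad f_t(x)$ --- is exactly the standard proof of \cite[Theorem~10.31]{rockafellar-wets-1998} specialized to this setting, and it is sound. The trade-off: the paper's citation is shorter and rests on a result valid without convexity (lower-$\C^1$ functions are subdifferentially regular), whereas your version is self-contained, stays within elementary convex analysis, and makes explicit where the compactness of $T$ and the joint continuity required by Definition~\ref{lowerc1} actually enter.
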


\begin{proof}
Combine \rm\cite[Theorem 10.31]{rockafellar-wets-1998} and \rm\cite[Proposition 8.12]{rockafellar-wets-1998}.
\end{proof}

\begin{thm} {\rm \cite[Proposition~10.54]{rockafellar-wets-1998}}\label{save}
Let $f:O\to \RR$ be a lower-$\C^2$ function, and let $X$ be a nonempty
compact subset of $O$. Then there exists an open set $O'$ with $X\subseteq
O'\subseteq O$, such that $f$ has a common lower-$\C^2$ representation
valid at all points $x\in O'$, i.e., there exists a compact topological
space $T$, and a family of functions $(f_t)_{t\in T}$ defined on $O'$ such that 
\begin{equation}
f=\max_{t\in T}f_t \text{~~~~on ~~}O',
\end{equation}
 and the functions $(t,x)\mapsto f(t,x)$, $(t,x)\mapsto \grad f(t,x)$, and $(t,x)\mapsto \grad^2 f(t,x)$ are continuous on $T\times O'$.
\end{thm}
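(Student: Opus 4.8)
The plan is a compactness-plus-partition-of-unity argument: cover $X$ by finitely many patches on which Definition~\ref{lowerc1} already supplies local lower-$\C^2$ representations, and then glue these finitely many representations into a single one valid on a whole neighbourhood $O'$ of $X$, using smooth bump functions to transfer between patches.

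For each $x\in X$, Definition~\ref{lowerc1} gives an open $V_x\subseteq O$ and a representation $f=\max_{t\in T_x}f^x_t$ on $V_x$ in which each $f^x_t$ is $\C^2$ and $(t,y)\mapsto f^x_t(y)$, $\grad f^x_t(y)$, $\grad^2 f^x_t(y)$ are continuous on $T_x\times V_x$. Since $X$ is compact, finitely many such sets $V_1,\dots,V_N$, with associated data $(T_i,(f^i_t)_{t\in T_i})$, cover $X$. A standard shrinking argument produces open sets $U_i$ with $X\subseteq\bigcup_{i=1}^N U_i$ and each $\overline{U_i}$ a compact subset of $V_i$; put $O':=\bigcup_{i=1}^N U_i$, which is open and satisfies $X\subseteq O'\subseteq O$. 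Finally choose $\beta_i\in\C^\infty(\RR^m)$ with $0\le\beta_i\le 1$, with $\operatorname{supp}\beta_i$ a compact subset of $V_i$, and with $\sum_{i=1}^N\beta_i\equiv 1$ on $O'$.

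Now set $T:=T_1\times\cdots\times T_N$, a compact topological space, and for $t=(t_1,\dots,t_N)\in T$ define $F_t:=\sum_{i=1}^N\beta_i\,f^i_{t_i}$ on $O'$, where each summand $\beta_i f^i_{t_i}$ (a priori defined only on $V_i$) is extended by $0$ on $O'\setminus\operatorname{supp}\beta_i$; since $V_i\cap O'$ and $O'\setminus\operatorname{supp}\beta_i$ are open, cover $O'$, and $\beta_i$ together with all its derivatives vanishes on their intersection, each extended summand is $\C^2$ on $O'$, hence so is $F_t$. It then remains to check two things. First, $f=\max_{t\in T}F_t$ on $O'$: using $f^i_{t_i}\le f$ on $V_i$, $\beta_i\ge 0$, and $\sum_i\beta_i=1$ gives $F_t\le f$; and since each $T_i$ is compact the inner maxima are attained, so $\max_{t\in T}F_t=\max_{(t_i)}\sum_i\beta_i f^i_{t_i}=\sum_i\beta_i\max_{t_i\in T_i}f^i_{t_i}=\sum_i\beta_i f=f$. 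Second, $(t,x)\mapsto F_t(x)$, $\grad_x F_t(x)$, $\grad^2_x F_t(x)$ are continuous on $T\times O'$: for each $i$ the map $(t_i,x)\mapsto(\beta_i f^i_{t_i})(x)$ and its $x$-derivatives up to order two are jointly continuous on $T_i\times O'$ — the product rule on $T_i\times(V_i\cap O')$ combines the smooth $\beta_i$ with the jointly continuous $f^i_t$ and its derivatives, while on $T_i\times(O'\setminus\operatorname{supp}\beta_i)$ they are identically $0$ — and precomposing with the continuous coordinate projection $T\to T_i$ and summing over $i$ yields the claim. The family $(F_t)_{t\in T}$ is then the asserted common representation.

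I expect the only point requiring genuine care to be the zero-extension in the gluing step. It is legitimate precisely because the shrinking argument lets us take $\operatorname{supp}\beta_i$ to be a \emph{compact} subset of $V_i$, so that $O'\setminus\operatorname{supp}\beta_i$ is open and the product $\beta_i f^i_{t_i}$, together with its first two derivatives, extends continuously — indeed by zero — across the part of $O'$ lying outside $V_i$; without this one cannot make sense of $F_t$ globally on $O'$. The second mildly non-obvious ingredient is that the correct combined index set is the finite \emph{product} $T_1\times\cdots\times T_N$, not a disjoint union, which is dictated by the identity $\sum_i\max_{t_i}a^i_{t_i}=\max_{(t_i)}\sum_i a^i_{t_i}$ used above. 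Everything else is routine bookkeeping; compare \cite[Proposition~10.54]{rockafellar-wets-1998}.
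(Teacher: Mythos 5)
The paper does not actually prove this statement: Theorem~\ref{save} is imported verbatim as \cite[Proposition~10.54]{rockafellar-wets-1998}, so there is no in-paper argument to compare yours against. Your partition-of-unity proof is correct and self-contained, and the two delicate points you single out are indeed the right ones, handled properly: the zero-extension of $\beta_i f^i_{t_i}$ is legitimate because $\operatorname{supp}\beta_i$ is a compact subset of $V_i$, so the open sets $V_i\cap O'$ and $O'\setminus\operatorname{supp}\beta_i$ cover $O'$ and the two defining formulas (together with their first two derivatives) agree on the overlap; and taking the \emph{product} index set $T_1\times\cdots\times T_N$ is exactly what makes $\max_{t\in T}F_t=\sum_i\beta_i\max_{t_i\in T_i}f^i_{t_i}=f$ go through, using $\beta_i\ge 0$, $\sum_i\beta_i=1$ on $O'$, and attainment of the inner maxima on the compact $T_i$ (with the convention that the $i$-th term is $0$ wherever $\beta_i$ vanishes, so the identity holds even at points outside $V_i$). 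The only step you leave implicit is the existence of smooth $\beta_i$ with compact support in $V_i$ summing to $1$ on all of the \emph{open} set $O'$; this is standard, but deserves one sentence: $\overline{O'}\subseteq\bigcup_i\overline{U_i}$ is compact and contained in $\bigcup_i V_i$, so a smooth partition of unity subordinate to $(V_i)_{i=1}^N$ that equals $1$ on a neighbourhood of $\overline{O'}$ does the job. With that remark added, your argument is a complete proof of the cited result.
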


To prove convergence of the algorithm introduced in this paper, we require bounds on the subgradients of the objective and the constraint functions. Lemma~\ref{bound:aux} provides a proof of the existence of this bound.
\begin{lem}\label{bound:aux}
Let $f:O\to \RR$ be convex, and let $X$ be a nonempty compact subset of $O$. Then 
\begin{equation*}
\sup\norm{\pt f(X)}<+\infty.
\end{equation*}
\end{lem}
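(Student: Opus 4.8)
The plan is to combine the defining subgradient inequality~\eqref{subdiffer} with the classical fact that a finite convex function on an open convex set is locally bounded. The first step is to fatten $X$ slightly while staying inside $O$: since $X$ is compact and $O$ is open with $X\subseteq O$, either $O=\RR^m$ or $\operatorname{dist}(X,\RR^m\setminus O)>0$, so in either case we may fix $\delta>0$ small enough that the closed enlargement $X_\delta:=\stb{x\in\RR^m:\operatorname{dist}(x,X)\le\delta}$ is still contained in $O$; note that $X_\delta$ is compact.

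Since $f$ is convex and real-valued on the open set $O$, it is locally Lipschitz, hence continuous, on $O$, and therefore bounded on the compact set $X_\delta$, say $\abs{f(y)}\le M$ for all $y\in X_\delta$. Now fix $x\in X$ and $v\in\pt f(x)$. If $v=0$ there is nothing to do; otherwise put $y:=x+\delta v/\norm{v}\in X_\delta\subseteq O$, and apply~\eqref{subdiffer} at this $y$ to obtain
\begin{equation*}
f(y)\ge f(x)+\innp{v,y-x}=f(x)+\delta\norm{v},
\end{equation*}
whence $\delta\norm{v}\le f(y)-f(x)\le 2M$, i.e.\ $\norm{v}\le 2M/\delta$. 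Since $x\in X$ and $v\in\pt f(x)$ were arbitrary, this yields $\sup\norm{\pt f(X)}\le 2M/\delta<+\infty$.

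The argument is short, and there is no serious obstacle: the only step needing a word of justification is the first one — producing a compact neighbourhood of $X$ that still lies inside $O$ — which is just the standard fact that a compact set and a disjoint closed set in $\RR^m$ are separated by a positive distance, together with the trivial case $O=\RR^m$. The local boundedness of finite convex functions invoked in the second step is classical convex analysis, and the remainder is a one-line application of the definition of the subdifferential.
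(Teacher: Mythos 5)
Your proof is correct, but it takes a genuinely different route from the paper's. The paper leans on the lower-$\C^2$ machinery it develops anyway: it invokes Theorem~\ref{save} to obtain a common representation $f=\max_{t\in T}f_t$ on a neighbourhood of $X$, applies the Weierstrass theorem to the continuous map $(t,x)\mapsto\norm{\grad f_t(x)}$ on the compact set $T\times X$ to get a constant $L$, and then uses Lemma~\ref{subgd} to write every subgradient as a convex combination of the $\grad f_t(x)$, so that the same $L$ bounds $\norm{v}$. You instead give a self-contained classical convex-analysis argument: fatten $X$ to a compact $X_\delta\subseteq O$ (the positive-distance step is fine, including the trivial case $O=\RR^m$), bound $\abs{f}$ by $M$ on $X_\delta$ via continuity of a finite convex function on an open set, and test the subgradient inequality \eqref{subdiffer} at $y=x+\delta v/\norm{v}$ to conclude $\norm{v}\le 2M/\delta$. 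Your route is more elementary and more general — it needs no lower-$\C^2$ representation and is essentially the standard local boundedness of $\pt f$, which the paper itself acknowledges as an alternative in its closing parenthetical (via Rockafellar's Theorem~24.7). The paper's route has the advantage of exercising the representation and subgradient formula that the rest of the paper depends on, and of producing the bound as $\max_{(t,x)}\norm{\grad f_t(x)}$, i.e.\ in terms of the data the algorithm actually manipulates. All the steps you flag as needing justification are standard and correctly applied.
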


\begin{proof} Since $f$ is convex, Lemma \ref{con_lc2} implies that
$f$ is  lower-$\C^2$ on $O$. Since $X$ is a nonempty compact subset
of $O$, Theorem~\ref{save} guarantees the existence of an open
subset $O'$ with $X\subseteq O' \subseteq O$ such that $f$ has a
common lower-$\C^2$ representation valid at all points $x\in O'$.
Let $f=\max_{t\in T}f_t$ be as stated in Theorem~\ref{save}. The
definition of lower-$\C^2$ implies that the mapping $(t,x)\mapsto
\norm{\grad f_t(x)}$ is continuous on $T\times O'$. 
By the Weierstrass Theorem, 
$L := \max_{(t,x)\in T\times X} \norm {\grad f_t(x)} < \infty$.
Now, let $x\in X$, and let $v\in \pt f(x)$.
Using Lemma~\ref{subgd} we know that $v=\sum_{t\in A(x)} \lam _t
\grad f_t(x)$ for some $\lam\in S^{r}$ where $r\in\NN$ is the number of
elements in $A(x)$. Therefore
\begin{equation*}
\norm{v}=\bigg| \sum_{t\in A(x)} \lam _t \grad f_t(x) \bigg|\\
\le \sum_{t\in A(x)} \lam _t  \norm {\grad f_t(x) }\\
 \le \sum_{t\in A(x)} \lam _t L\\
 =L,
 \end{equation*} 
 and the proof is complete.
(Alternatively, one may consider either the lower semicontinuous hull of
$f$ and apply {\rm \cite[Theorem~24.7]{Rocky70}}, or use {\rm \cite[Corollary~12.38]{rockafellar-wets-1998}} after
extending $\partial f$ to a maximally monotone operator.)
\end{proof}

\begin{lem} \label{coffee}
Let $f:O\to \RR$ be a lower-$\C^2$ function, and let $X$ be a nonempty
compact convex subset of $O$. 
Let $O'$, $T$, and $(f_t)_{t\in T}$ be as in 
Theorem~\ref{save}. 
Then there exists $K_f \geq 0$ such that 
$\nabla f_t$ is $K_f$-Lipschitz on $O'$
for every $t\in T$. 
\end{lem}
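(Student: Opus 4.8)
The plan is to derive the Lipschitz bound on the gradients $\nabla f_t$ from a uniform bound on the \emph{spectral} norms of the Hessians $\nabla^2 f_t$, obtained from compactness, and then to pass from Hessian to gradient by integrating along line segments. The convexity of $X$ assumed here (but not in Lemma~\ref{bound:aux}) enters because it lets us arrange a \emph{convex} domain on which that segment argument is legitimate.

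\textbf{Preliminary reduction.} A $\C^2$ function on an unbounded open set need not have a globally Lipschitz gradient there, so the estimate cannot be expected on an arbitrary $O'$ furnished by Theorem~\ref{save}; accordingly $O'$ should be taken bounded. Concretely: since $X$ is compact and $O'$ is open with $X\subseteq O'$, there is $\delta>0$ with $X+B(0;\delta)\subseteq O'$. Replace $O'$ by the open set $O'':=\stb{x\in\RR^m:\inf_{z\in X}\norm{x-z}<\delta/2}$ and each $f_t$ by its restriction to $O''$. Then $X\subseteq O''$; $O''$ is convex, being the Minkowski sum of the convex set $X$ and an open ball; its closure satisfies $\overline{O''}\subseteq X+B(0;\delta/2)\subseteq O'$ and is compact; and $(f_t)_{t\in T}$ still satisfies all conclusions of Theorem~\ref{save} on $O''$.

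\textbf{Uniform Hessian bound.} By Theorem~\ref{save}, the map $(t,x)\mapsto\nabla^2 f_t(x)$ is continuous on $T\times O'$, hence $(t,x)\mapsto\norm{\nabla^2 f_t(x)}$ is a continuous real-valued function on the compact set $T\times\overline{O''}$. By the Weierstrass theorem, $K_f:=\max_{(t,x)\in T\times\overline{O''}}\norm{\nabla^2 f_t(x)}$ is finite and nonnegative.

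\textbf{Conclusion.} Fix $t\in T$ and $x,y\in O''$. By convexity of $O''$, the segment $\stb{y+s(x-y):s\in\clint{0,1}}$ lies in $O''$; since $f_t\in\C^2$, the fundamental theorem of calculus gives $\nabla f_t(x)-\nabla f_t(y)=\int_0^1\nabla^2 f_t\bk{y+s(x-y)}(x-y)\,ds$, and taking norms together with the bound from the previous step yields $\norm{\nabla f_t(x)-\nabla f_t(y)}\le K_f\norm{x-y}$, with $K_f$ independent of $t$. The only genuinely delicate point is the preliminary reduction — recognizing that $O'$ must be taken bounded (equivalently, with compact closure in $O$) and checking that this preserves the common representation of Theorem~\ref{save}; granting that, the uniform Hessian bound via compactness and the segment integration are routine.
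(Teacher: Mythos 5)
Your proof is correct and follows essentially the same route as the paper's: a uniform bound on $\|\nabla^2 f_t\|$ via continuity plus compactness (Weierstrass), followed by the mean value theorem / integration along segments. Your preliminary reduction to a bounded convex $O''$ is in fact a point of added rigour rather than a different method: the paper's one-line proof maximizes $\|\nabla^2 f_t\|$ only over $T\times X$ and then invokes the Mean Value Theorem, which strictly speaking yields Lipschitzness only on the convex compact set $X$ (all that is used later in Theorem~\ref{errbdd}), not on an arbitrary, possibly unbounded and non-convex $O'$ as the lemma literally states.
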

\begin{proof}
By Theorem~\ref{save}, 
$(t,x)\mapsto\grad^2 f_t(x)$ is continuous on the compact set $T\times X$.
Therefore, by the Weierstrass theorem,
$K_f := \max_{(t,x)\in T\times X}\|\grad^2 f_t(x)\| <+\infty$. 
Now apply the Mean Value Theorem {\rm \cite[Theorem~5.1.12]{Denk03}}.

\end{proof}

\subsection*{The Linear Interpolation Model }
In our method we use a derivative-free model-based technique. Therefore, in this section we introduce the definition of the linear interpolation model and related facts.

\begin{defn}
Let $f:O\to \RR$ be a function, and let $Y= \tuple\in \matt$. 
If the matrix 
\begin{equation*}
Q=\begin{pmatrix}
1&\ay{1}{0}&\dots&\ay{m}{0}\\
1&\ay{1}{1}&\dots&\ay{m}{1}\\
  \vdots  & \vdots  & \ddots & \vdots  \\
1&\ay{1}{m}&\dots&\ay{m}{m}
\end{pmatrix}
\end{equation*}

is invertible, then $Y$ is said to be \emph{a poised tuple centred at $y_0$}. Moreover, if $\stb{y_0,y_1,\ldots,y_m}\subseteq O$ then $Y$ is said to be \emph{a poised tuple centred at $y_0$ with respect to $f$}. In this case the linear system 
\begin{equation*}
\begin{pmatrix}
1&\ay{1}{0}&\dots&\ay{m}{0}\\
1&\ay{1}{1}&\dots&\ay{m}{1}\\
  \vdots  & \vdots  & \ddots & \vdots  \\
1&\ay{1}{m}&\dots&\ay{m}{m}
\end{pmatrix}
\begin{pmatrix}
\alpha_0\\
\alpha_1\\
\vdots\\
\alpha_m
\end{pmatrix}
=
\begin{pmatrix}
f(\spy{0})\\
f(\spy{1})\\
\vdots\\
		f(\spy{m})
\end{pmatrix}.
\end{equation*}
 has a unique solution $(\alpha_0,\alpha_1,\ldots,\alpha_m)\in \matt$, and the \emph{Linear Interpolation Model} of the function $f$ over $Y$ is the unique (well defined) function 
 
\begin{equation*}
F\colon \RR^m \to \RR \colon x\mapsto \alpha_0+\ds\sum_{i=1}^{n}\alpha_i x_i.
\end{equation*}
Note that in this case $F$ satisfies the interpolation conditions
\begin{equation*}
F(\spy{i})=f(\spy{i}),~~ \text{for every}~~i\in \stb{0,1,\ldots,m}.
\end{equation*}
\end{defn}

The following Theorem provides the error bound satisfied by the approximate gradient of the linear interpolation model.

\begin{thm}{\rm\cite[Theorem 2.11]{conn-scheinberg-vicente-2009}}\label{Lin_int_err}
Suppose that $f:O\to \RR$ is $\C^2$ function on $O$. Let $y_0\in O$. Assume that $Y=\tuple \in \matt$ is a poised tuple of sample points centred at $y_0$ with respect to $f$. 
Set $ \Delta=\ds\max _{1\le i\le m}\norm{\spy{i}-\spy{0}} $. Suppose that $B (\spy{0};\Delta)\subseteq O$.
Let  $\grad f$ be $\Kf{f}$ Lipschitz over $B (\spy{0};\Delta)$. Then the gradient of the linear interpolation model $F$ satisfies an error bound of the form
\begin{equation*}
\norm{\grad f(y)-\grad F(y)}\le K \Delta,~~~\text{for all} ~~~y\in B (\spy{0};\Delta) ,
\end{equation*}
where 
\begin{equation}\label{mat_L_def}
 K:=K_f(1+\sqrt{m}\norm{\hat{L}^{-1}}/2), ~~~  L=L(Y):=
 \begin{pmatrix}
\spy{1}-\spy{0}\\
 \spy{2}-\spy{0}\\
 \vdots\\
\spy{m}-\spy{0}
 \end{pmatrix},
\text{   and  }\hat{L}=\hat{L}(Y):=\ds\frac{1}{\Delta}L. 
\end{equation} 
\end{thm}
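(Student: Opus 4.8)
The plan is to exploit the affine structure of the interpolation model $F$ together with the interpolation conditions and a Taylor estimate with Lipschitz remainder. First I would note that, since $F$ is affine, its gradient $g_F:=\grad F(y)=(\alpha_1,\dots,\alpha_m)$ is a \emph{constant} vector, independent of $y$. Subtracting the first interpolation equation $F(\spy{0})=f(\spy{0})$ from the equations $F(\spy{i})=f(\spy{i})$, $i\in\stb{1,\dots,m}$, cancels $\alpha_0$ and yields the linear system $Lg_F=\delta$, where $L=L(Y)$ is the matrix in \eqref{mat_L_def} and $\delta\in\RR^m$ has entries $\delta_i=f(\spy{i})-f(\spy{0})$. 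A one-line row-reduction argument (subtract the first row of $Q$ from the others and expand along the first column) shows that $Q$ is invertible if and only if $L$ is invertible, so poisedness of $Y$ gives $g_F=L^{-1}\delta$ and, in particular, $g_F$ is well defined.

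Next I would split the error by the triangle inequality: for $y\in B(\spy{0};\Delta)$,
\begin{equation*}
\norm{\grad f(y)-\grad F(y)}\le \norm{\grad f(y)-\grad f(\spy{0})}+\norm{\grad f(\spy{0})-g_F}.
\end{equation*}
The first term is at most $\Kf{f}\norm{y-\spy{0}}\le \Kf{f}\Delta$ by the Lipschitz hypothesis, and this is precisely the summand ``$1$'' inside $K$. For the second term I would write $\grad f(\spy{0})-g_F=L^{-1}\bk{L\,\grad f(\spy{0})-\delta}=-L^{-1}e$, where $e\in\RR^m$ has entries $e_i=f(\spy{i})-f(\spy{0})-\innp{\grad f(\spy{0}),\spy{i}-\spy{0}}$.

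The key estimate is $\abs{e_i}\le\tfrac12\Kf{f}\norm{\spy{i}-\spy{0}}^2\le\tfrac12\Kf{f}\Delta^2$. This comes from the fundamental theorem of calculus applied along the segment $[\spy{0},\spy{i}]$, which lies in $B(\spy{0};\Delta)$ because $\norm{\spy{i}-\spy{0}}\le\Delta$: writing $f(\spy{i})-f(\spy{0})=\int_0^1\innp{\grad f(\spy{0}+t(\spy{i}-\spy{0})),\spy{i}-\spy{0}}\,dt$ and subtracting the constant $\innp{\grad f(\spy{0}),\spy{i}-\spy{0}}=\int_0^1\innp{\grad f(\spy{0}),\spy{i}-\spy{0}}\,dt$, Cauchy--Schwarz and the $\Kf{f}$-Lipschitz continuity of $\grad f$ bound the integrand by $\Kf{f}\,t\,\norm{\spy{i}-\spy{0}}^2$, and $\int_0^1 t\,dt=\tfrac12$. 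Hence $\norm{e}\le\sqrt m\,\max_i\abs{e_i}\le\tfrac{\sqrt m}{2}\Kf{f}\Delta^2$.

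Finally I would combine these, using the normalization $\hat L=L/\Delta$, i.e. $\norm{L^{-1}}=\norm{\hat L^{-1}}/\Delta$:
\begin{equation*}
\norm{\grad f(\spy{0})-g_F}\le\norm{L^{-1}}\norm{e}\le\frac{\norm{\hat L^{-1}}}{\Delta}\cdot\frac{\sqrt m}{2}\Kf{f}\Delta^2=\frac{\sqrt m\,\norm{\hat L^{-1}}}{2}\Kf{f}\Delta,
\end{equation*}
and adding the bound $\Kf{f}\Delta$ on the first term produces exactly $K\Delta$ with $K=\Kf{f}\bk{1+\sqrt m\,\norm{\hat L^{-1}}/2}$. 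I expect the main obstacle to be purely bookkeeping: threading the normalization $\hat L$ correctly through the norm estimate, and ensuring the Taylor remainder is controlled uniformly on the whole ball $B(\spy{0};\Delta)$ (not merely at $\spy{0}$); the equivalence of invertibility of $Q$ and of $L$ is the only genuinely algebraic point, and it needs just a line.
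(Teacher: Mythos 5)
Your proof is correct, and since the paper offers no proof of this theorem (it only cites \cite[Theorem~2.11]{conn-scheinberg-vicente-2009}), there is nothing to diverge from: your argument --- reducing the interpolation conditions to $L\,g_F=\delta$, bounding the Taylor remainders $e_i$ by $\tfrac{1}{2}K_f\Delta^2$ via the Lipschitz gradient, and threading the normalization $\hat{L}=L/\Delta$ through $\|L^{-1}e\|$ --- is precisely the standard proof from the cited source and recovers the exact constant $K=K_f\bigl(1+\sqrt{m}\,\|\hat{L}^{-1}\|/2\bigr)$. All steps check out, including the equivalence of invertibility of $Q$ and $L$ and the use of the spectral norm in $\|L^{-1}e\|\le\|L^{-1}\|\,\|e\|$.
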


\subsection*{The Bregman Distance: Definition and Properties}
The last building block used in our analysis is the Bregman distance.

\begin{defn}{\rm \cite{bregman67}}
 Let $\om:O\to \RR$ be a convex differentiable function. 
The corresponding \emph{Bregman distance} $D_\om$ is
\begin{equation}
D_{\om}\colon O\times O \to\RR\colon (u,v)\mapsto\om(u)-\om(v)-\innp{\grad \om (v),u-v}.
\end {equation}
\end{defn}
 \begin{defn}{\rm\cite[Section~3.5]{Zal02}}
 Let $C$ be a nonempty convex subset of $\RR^m$. Let $\om:C\to \RR$. Then $\om$ is said to be \emph{strongly convex } with convexity parameter $\alpha >0$, if for all $x,y\in C$, $t\in \clint{0,1}$ we have
\begin{equation*}
\om(tx+(1-t)y)\le t\om(x)+(1-t)\om(y)-\frac{\alpha}{2}t(1-t)\norm{x-y}^2.
\end{equation*}
\end{defn}
Throughout the next arguments we shall assume that $\om $ is a strongly convex and differentiable function on a nonempty convex subset of $ \RR^m$ , with a convexity parameter $\alpha>0$.
In this paper we shall be interested in Bregman distances that are created from strongly convex functions.

The following result is part of the folklore (and established in much
greater generality in e.g., \cite[Section~3.5]{Zal02}); 
for completeness we include the proof. 
\begin{lem}\label{equi}
Let $\om:O\to \RR$ be a differentiable function. Let $X$ be a nonempty subset of $O$. Then the following are equivalent:
\begin{enumerate} [\rm (i)]
	\item \label{i} 
	$\om(\lambda x+(1-\lambda)y) \le \lambda \om(x)+(1-\lambda)\om(y)-\frac{\alpha}{2}\lambda(1-\lambda){\norm{x-y}}^2$ for all $x,y \in X$ and $\lambda \in \opint{0,1}$.
	\item \label{ii}  
$D_\om(x,y)=\om(x)-\om(y)-\innp{\grad \om(y),x-y} \ge
\frac{\alpha}{2}{\norm{x-y}}^2$ for all $x,y \in X$ and $\lambda \in
\opint{0,1}$. 
	\item \label{iii} $\innp{\grad \om(x)-\grad \om(y),x-y}\ge \alpha {\norm{x-y}}^2$ for all $x,y \in X$ and $\lambda \in \opint{0,1}$. 
\end{enumerate}
\end{lem}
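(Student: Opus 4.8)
The plan is to prove the standard cyclic chain of implications  \eqref{i} $\Rightarrow$ \eqref{ii} $\Rightarrow$ \eqref{iii} $\Rightarrow$ \eqref{i}, since this is the most economical route and each step is a short manipulation. Throughout, note that the parameter $\lambda$ appearing in \eqref{ii} and \eqref{iii} is vacuous (the statements do not actually involve $\lambda$), so I will simply read those two items as quantified over $x,y\in X$ only.

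For \eqref{i} $\Rightarrow$ \eqref{ii}: fix $x,y\in X$ and $\lambda\in\opint{0,1}$. Rearranging \eqref{i} gives
\[
\frac{\om(\lambda x+(1-\lambda)y)-\om(y)}{\lambda}\le \om(x)-\om(y)-\frac{\alpha}{2}(1-\lambda)\norm{x-y}^2 .
\]
The left-hand side is the forward difference quotient of $\om$ at $y$ in the direction $x-y$; letting $\lambda\downarrow 0$ and using differentiability of $\om$, it converges to $\innp{\grad\om(y),x-y}$, while the right-hand side converges to $\om(x)-\om(y)-\tfrac{\alpha}{2}\norm{x-y}^2$. Subtracting yields exactly \eqref{ii}. (One must check $X$ need not be convex for this limit to make sense: the segment $[y,x]$ may leave $X$, but it stays in $O$ since $O$ is convex and $\om$ is differentiable on all of $O$, so the difference quotient is well-defined; strictly, \eqref{i} is only assumed on $X$, so I will instead take the limit using the fact that the inequality in \eqref{i} holds for the given endpoints $x,y\in X$ and all $\lambda\in\opint{0,1}$, which is all that is needed.)

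For \eqref{ii} $\Rightarrow$ \eqref{iii}: fix $x,y\in X$ and write down \eqref{ii} once as stated and once with the roles of $x$ and $y$ interchanged:
\[
\om(x)-\om(y)-\innp{\grad\om(y),x-y}\ge\tfrac{\alpha}{2}\norm{x-y}^2,\qquad
\om(y)-\om(x)-\innp{\grad\om(x),y-x}\ge\tfrac{\alpha}{2}\norm{x-y}^2 .
\]
Adding the two inequalities, the $\om(x)$ and $\om(y)$ terms cancel and we obtain $\innp{\grad\om(x)-\grad\om(y),x-y}\ge\alpha\norm{x-y}^2$, which is \eqref{iii}.

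For \eqref{iii} $\Rightarrow$ \eqref{i}: fix $x,y\in X$ and $\lambda\in\opint{0,1}$, and set $\varphi(t):=\om\big(y+t(x-y)\big)$ for $t\in[0,1]$; this is well-defined and differentiable because $O$ is convex and contains $x,y$. Then $\varphi'(t)=\innp{\grad\om(y+t(x-y)),x-y}$, and \eqref{iii} applied to the points $y+t(x-y)$ and $y+s(x-y)$ (with $t>s$) gives $\varphi'(t)-\varphi'(s)\ge\alpha(t-s)\norm{x-y}^2$, i.e.\ $\varphi'$ is "$\alpha\norm{x-y}^2$-strongly increasing." Integrating this, or equivalently applying the one-dimensional characterization of strong convexity to $\varphi$, shows $\varphi$ is strongly convex with parameter $\alpha\norm{x-y}^2$ on $[0,1]$; evaluating the strong-convexity inequality for $\varphi$ at the point $\lambda\cdot 1+(1-\lambda)\cdot 0=\lambda$ recovers precisely \eqref{i}. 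I expect the mild technical wrinkle — not a real obstacle — to be this last step: one should either cite a clean one-variable lemma (e.g.\ integrate $\varphi'$ twice) or spell out the two-line computation $\varphi(\lambda)\le\lambda\varphi(1)+(1-\lambda)\varphi(0)-\tfrac{\alpha\norm{x-y}^2}{2}\lambda(1-\lambda)$ from the fact that $t\mapsto\varphi(t)-\tfrac{\alpha\norm{x-y}^2}{2}t^2$ has nondecreasing derivative and is therefore convex.
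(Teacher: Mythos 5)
Your argument is correct, and it differs from the paper's in its overall architecture: the paper proves the four implications (i)$\Rightarrow$(ii), (ii)$\Rightarrow$(i), (ii)$\Rightarrow$(iii), and (iii)$\Rightarrow$(ii), hubbing everything at (ii), whereas you close the single cycle (i)$\Rightarrow$(ii)$\Rightarrow$(iii)$\Rightarrow$(i). Your first two implications coincide with the paper's (same difference-quotient limit, same symmetrize-and-add trick), so the genuinely new content is the direct step (iii)$\Rightarrow$(i): you restrict $\omega$ to the segment via $\varphi(t)=\omega(y+t(x-y))$, observe that (iii) forces $\varphi'(t)-\varphi'(s)\ge \alpha\|x-y\|^2(t-s)$, hence $t\mapsto\varphi(t)-\tfrac{\alpha\|x-y\|^2}{2}t^2$ is convex, and evaluate at $t=\lambda$. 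This is a clean replacement for the paper's two-stage route, which first recovers (ii) from (iii) by the fundamental theorem of calculus (integrating $\tfrac1t\langle\nabla\omega(y+t(x-y))-\nabla\omega(y),t(x-y)\rangle$) and then recovers (i) from (ii) by the three-point convex-combination computation; your cycle saves one implication and the bookkeeping of that computation. One caveat applies equally to both proofs and is worth noting: in the step from (iii) you apply the monotonicity inequality to points $y+t(x-y)$ of the segment, which need not lie in $X$ when $X$ is merely a nonempty subset of $O$ (the paper's proof of (iii)$\Rightarrow$(ii) does exactly the same), so the implication out of (iii) really requires (iii) on the convex hull of $X$, or simply $X$ convex, which is the setting in which the lemma is used. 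You flag the analogous point for (i)$\Rightarrow$(ii) and resolve it correctly; no further changes are needed.
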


\begin{proof}
``\eqref{i}$\Rightarrow$\eqref{ii}'':
Rewrite (\ref{i}) as
\begin{equation}
 \om(y+\lambda(x-y)) \le \lambda \om(x)+(1-\lambda)\om(y)-\frac{\alpha}{2}\lambda(1-\lambda){\norm{x-y}}^2 .
\end{equation}
Hence 
\begin{equation*}
\frac{\om(y+\lambda(x-y)) -\omega (y)}{\lambda} \le \om(x)-\om(y)-\frac{\alpha}{2}(1-\lambda){\norm{x-y}}^2.
\end{equation*}
Taking the limit as $\lambda \ra 0^{+}$ and using the assumption that $\om$ is differentiable we see that
\begin{equation*}
\innp{\grad \om(y),x-y}\le \om(x)-\om(y)- \frac{\alpha}{2}{\norm{x-y}}^2.
 \end{equation*}
Hence \eqref{ii} holds.

 ``\eqref{ii}$\RA$\eqref{i}".
Suppose that (\ref{ii}) holds for all $ x,y \in X$. Let $\lam\in \opint{0,1}$. Set $z=\lambda x +(1-\lambda)y \in X$. Applying  (\ref{ii}) to $x$ and $z$ yields
\begin{equation}\label{ineq:1}
\om(z) \le \om(x)-\innp{\grad \om(z),x-z} - \frac{\alpha}{2}{\norm{x-z}}^2.
\end{equation}
Similarly, applying  (\ref{ii}) to $y$ and $z$ yields 
\begin{equation}\label{ineq:2}
\om(z) \le \om(y)-\innp{\grad \om(z),y-z} - \frac{\alpha}{2}{\norm{y-z}}^2.
\end{equation}
Multiplying \eqref{ineq:1} by $\lambda$ and \eqref{ineq:2} by $(1-\lambda)$, and adding we get
\begin{align*}
\om(z) & \le \lambda \om(x) +(1-\lambda )\om(y)-\lambda \innp{\grad \om(z),x-z} -(1-\lambda) \innp{\grad \om(z),y-z}\\
&  -\frac{\alpha}{2}\bk{\lambda {\norm{x-z}}^2+(1-\lambda) {\norm{y-z}}^2}.
\end{align*}
Notice that
$x-z=(1-\lambda)(x-y)$ and $y-z=\lambda(y-x)$. Thus, substituting in the last inequality we get
\begin{align*}
\om(z)&\le\lambda \om(x) +(1-\lambda )\om(y)-\lambda \innp{\grad \om(z),(1-\lambda)(x-y)} -(1-\lambda) \innp{\grad \om(z),\lambda(y-x)}\\
&  -\frac{\alpha}{2}[\lambda (1-\lambda)^2{\norm{x-y}}^2+(1-\lambda){\lambda}^2{\norm{x-y}}^2]\\
 & =  \lambda \om(x) +(1-\lambda )\om(y) -\lambda (1-\lambda) \innp{\grad \om(z), x-y}+\lambda(1-\lambda)\innp{\grad \om(z),x-y}\\
&  -\frac{\alpha}{2}\lambda (1-\lambda)\bk{ (1-\lambda){\norm{x-y}}^2+{\lambda}{\norm{x-y}}^2}\\
 & =  \lambda \om(x) +(1-\lambda )\om(y) -\frac{\alpha}{2}\lambda (1-\lambda){\norm{x-y}}^2.
\end{align*}
Substituting for $z=\lambda x +(1-\lambda)y$ gives (\ref{i}).

``\eqref{ii}$\RA$\eqref{iii}".
Suppose that (\ref{ii}) holds $\forall x,y \in X$. Then we have
\begin{equation}\label{abv:1}
\om(x)-\om(y)-\innp{\grad \om(y),x-y} \ge  \frac{\alpha}{2}{\norm{x-y}}^2,
\end{equation}
\begin{equation}\label{abv:2}
\om(y)-\om(x) + \innp{\grad \om(x),x-y} \ge  \frac{\alpha}{2}{\norm{x-y}}^2.
\end{equation}
Adding \eqref{abv:1} and \eqref{abv:2} we get (\ref{iii}).

``\eqref{iii}$\RA$\eqref{ii}".
 By the fundamental theorem of calculus we have for $t \in \opint{0,1}$
\begin{equation*}
\om(x)-\om(y)=\int_{0}^{1}\innp{\grad \om(y+t(x-y)),x-y}dt.
\end{equation*}
Subtracting $\innp{\grad \om(y),x-y}$, noting that $\int_{0}^{1}\innp{\grad \om(y),x-y}dt=\innp{\grad \om(y),x-y}$ and using (iii)  we get
\begin{align*}
\om(x)-\om(y)-\innp{\grad \om(y),x-y} &= \int_{0}^{1}\innp{\grad \om(y+t(x-y))-\grad \om(y),x-y} dt\\
&= \int_{0}^{1}\frac{1}{t}\innp{\grad \om(y+t(x-y))-\grad \om(y),t(x-y)} dt\\
& \ge  \int_{0}^{1}\frac{1}{t} \alpha {\norm{t(x-y)}}^2 dt\\
& =  \alpha {\norm{x-y}}^2\int_{0}^{1}\frac{1}{t} t^2  dt\\
&=\frac{\alpha}{2}{\norm{x-y}}^2,
\end{align*}
which completes the proof.
\end{proof}

Following \cite{BBTGBT2010}, we give the definition of the Bregman diameter of an arbitrary set $X$.

\begin{defn}
Let $\om:O\to \RR $ be a convex differentiable function. Let $X$ be a nonempty subset of $O$. The \emph{Bregman diameter of the set $X$} is defined as 
\begin{equation}\label{bregdm}
\Theta= \sup \{D_{\om}(u,v):u,v \in X\}. 
\end{equation}
\end{defn}
In the following lemma we prove that, if $\om$ is differentiable and strongly convex, then the Bregman diameter is finite for every compact subset of $\RR^m$.

\begin{lem}\label{diam}
 Let $\om : O \ra \mathbb{R}$ be a differentiable convex function. Let $X$ be a nonempty compact subset of $O$. Then $D_\omega $ is bounded on $X\times X$. Consequently, the Bregman diameter of the set $X$ is finite.
\end{lem}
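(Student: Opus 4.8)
The plan is to deduce everything from the Weierstrass extreme value theorem, after observing that the Bregman distance is a continuous function. Recall that $D_\om(u,v)=\om(u)-\om(v)-\innp{\grad\om(v),u-v}$. The map $(u,v)\mapsto\om(u)-\om(v)$ is continuous because $\om$, being differentiable on $O$, is in particular continuous. The map $(u,v)\mapsto\innp{\grad\om(v),u-v}$ is continuous provided $v\mapsto\grad\om(v)$ is continuous on $O$; for this I would invoke the classical fact that a convex function which is differentiable on an open convex set has a continuous gradient there (see, e.g., \cite[Theorem~25.5]{Rocky70}). Combining these, $D_\om$ is continuous on $O\times O$.

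Since $X$ is a nonempty compact subset of the open set $O$, the product $X\times X$ is a nonempty compact subset of $O\times O$. Hence the continuous function $D_\om$ attains a finite maximum and minimum on $X\times X$; in particular $D_\om$ is bounded on $X\times X$, which is the first assertion. The second is then immediate: $\Theta=\sup\{D_\om(u,v):u,v\in X\}$ coincides with the finite maximum of $D_\om$ over $X\times X$, so $\Theta<+\infty$. (One could also note that convexity of $\om$ forces $D_\om\ge 0$, so that only an upper bound really needs to be established.)

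A completely elementary alternative, which uses the convexity of $\om$ only at the level of local boundedness of $\grad\om$, is to estimate directly: continuity of $\om$ yields $M:=\max_{x\in X}\abs{\om(x)}<+\infty$; local boundedness of $\grad\om$ on $O$ (again a standard consequence of convexity) yields $\Lambda:=\sup_{v\in X}\norm{\grad\om(v)}<+\infty$; and compactness of $X$ yields $d:=\sup_{u,v\in X}\norm{u-v}<+\infty$. Then, for all $u,v\in X$,
\[
\abs{D_\om(u,v)}\le\abs{\om(u)}+\abs{\om(v)}+\norm{\grad\om(v)}\,\norm{u-v}\le 2M+\Lambda d,
\]
and in particular $\Theta\le 2M+\Lambda d<+\infty$. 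In either route the only point needing care is the passage from ``convex and differentiable'' to ``gradient continuous (or locally bounded)''; everything else is routine.
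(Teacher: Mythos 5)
Your first argument is exactly the paper's proof: both invoke Rockafellar's result that a differentiable convex function on an open set is continuously differentiable (the paper cites \cite[Corollary~25.5.1]{Rocky70}), conclude that $D_\om$ is continuous, and finish by compactness of $X\times X$. The elementary alternative you sketch is a fine addition but not needed; the main route is correct and matches the paper.
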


\begin{proof}
Since $\om $ is convex and differentiable, therefore $\om$ is continuously differentiable on $O$ 
{\rm\cite[Corollary~25.5.1]{Rocky70}}. Thus, $\om$ and $\grad \om$ are continuous on $X$, and therefore $D_\om$ is continuous on $X\times X$. Now, $X\times X$ is a nonempty compact subset of $\RR^m\times \RR^m$, and therefore $D_\omega $ is bounded on $X\times X$ and the Bregman diameter of the set $X$ is finite.
\end{proof}

\section{Functional Constraints and Assumptions}\label{s:3}

Recall that we are interested in the general convex problem of the form
\begin{equation}\label{P}
(P):\min~\{f(x):g(x) \le 0, x\in X\}.
\end{equation}

In the sequel, we shall consider the following assumptions on $f$, $g$ and $X$.

\begin{description}
\sepp
	\item[A1] $f:O\to \RR$ and $g:O\to \RR$ are continuous convex functions. 
	\item[A2] $X$ is a nonempty compact convex subset of $O$, and $X$ is not a singleton.
	\item[A3] We have access to lower-$\C^{2}$ 
	representations (see Theorem~\ref{save}) of $f$ and $g$ on some open subset $O'$ of $O$ such
	that $X\subseteq O'$ and 
		\begin{equation*}
		f=\ds\max_{t\in T_f} f_t ~~~~~\text{and}~~~~~~ 
		g=\ds\max_{t\in T_g} g_t ~~~\text{on $O'$.}
		\end{equation*}
		
	\item[A4] The set of optimal solutions of problem $(P)$ is nonempty. 
	\end{description}

\begin{rem}\label{assum}
Under Assumption {\bf A1}, the functions $f$ and $g$ are lower-$\C^{2}$ functions on $O$ (by Lemma~\ref{con_lc2}).  Assumption {\bf A3} provides the stronger statement that we have access to lower-$\C^2$ representations of these functions.
\end{rem}

\begin{lem}\label{bdsbgd}
Suppose that Assumptions {\bf A1} and {\bf A2} hold. 
Then 
\begin{equation}\label{gradbd}
\Lf{f}:= \sup\|\pt f(X)\|<+\infty  ~~~\text{and}~~~ 
\Lf{g}:= \sup\|\pt g(X)\|<+\infty. 
\end{equation}
\end{lem}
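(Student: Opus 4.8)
The plan is to observe that Lemma~\ref{bdsbgd} is almost an immediate consequence of the already-established Lemma~\ref{bound:aux}. Indeed, Assumption \textbf{A1} tells us that $f$ and $g$ are convex functions on $O$, and Assumption \textbf{A2} tells us that $X$ is a nonempty compact (convex) subset of $O$. These are exactly the hypotheses of Lemma~\ref{bound:aux}.

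First I would apply Lemma~\ref{bound:aux} to $f$: since $f:O\to\RR$ is convex (by \textbf{A1}) and $X$ is a nonempty compact subset of $O$ (by \textbf{A2}), we conclude $\sup\norm{\pt f(X)}<+\infty$, so $\Lf{f}:=\sup\|\pt f(X)\|$ is a well-defined finite quantity. Then I would repeat the identical argument with $g$ in place of $f$, again invoking \textbf{A1} and \textbf{A2}, to obtain $\Lf{g}:=\sup\|\pt g(X)\|<+\infty$. That completes the proof; there is essentially no obstacle here, since the real work was done in Lemma~\ref{bound:aux} (which in turn rested on Lemma~\ref{con_lc2}, Theorem~\ref{save}, Lemma~\ref{subgd}, and the Weierstrass theorem). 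The only thing to note is that the convexity half of \textbf{A2} is not needed for this particular lemma.

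For concreteness, here is the proof I would write.

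\begin{proof}
By Assumption \textbf{A1}, $f:O\to\RR$ is convex, and by Assumption \textbf{A2}, $X$ is a nonempty compact subset of $O$. Hence Lemma~\ref{bound:aux}, applied to $f$, yields $\Lf{f}=\sup\norm{\pt f(X)}<+\infty$. Applying Lemma~\ref{bound:aux} to $g$ in exactly the same way gives $\Lf{g}=\sup\norm{\pt g(X)}<+\infty$. This establishes \eqref{gradbd}.
\end{proof}
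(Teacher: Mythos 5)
Your proposal is correct and matches the paper's own proof, which likewise just combines Assumptions \textbf{A1} and \textbf{A2} with Lemma~\ref{bound:aux} applied to $f$ and to $g$. Your added observation that convexity of $X$ is not needed here is accurate but incidental.
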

\begin{proof}
Combine Remark~\ref{assum}, Assumption~\textbf{A2}, and Lemma~\ref{bound:aux}(ii). 
\end{proof}

In the following Theorem, we give an error bound for the approximate subgradient. 
 
\begin{thm}\label{errbdd}
  Suppose that {\bf A1}, {\bf A2}, {\bf A3}, and {\bf A4} hold.
  Let $Y=\tuple\in \matt$ be a poised tuple of sample
  points centred at $y_0\in X$ with respect to $f$. 
  Set
  $ \Delta=\ds\max _{1\le i\le m}\norm{\spy{i}-\spy{0}} $. Suppose that $B
  (\spy[y]{0};\Delta)\subseteq X$. Let $y\in B (\spy[y]{0};\Delta)$. Let
  $(t_1,\ldots,t_r)\in A(y)^r$, and $\lam\in \Cr{r}$, where $r\in\NN$. 
Define $V=V(y):=\sum_{i=1}^{r} \lam_i \grad F_{t_i}(y)$. 
Then there exists $v\in \pt f(y)$ such that the following error bound
holds:
\begin{equation*}
\norm{V-v}\le K_f(1+\sqrt{m}\norm{\hat{L}^{-1}}/2)~ {\Delta},
\end{equation*}
where $K_f$ is as in Lemma~\ref{coffee}, 
and $\hat{L}=\hat{L}(Y)$ is as defined in Theorem~\ref{Lin_int_err}.
\end{thm}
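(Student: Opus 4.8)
The plan is to reduce Theorem~\ref{errbdd} to the $\C^2$ error bound of Theorem~\ref{Lin_int_err} applied componentwise to the smooth pieces $f_t$, and then transfer the bound from a convex combination of approximate gradients to a single subgradient via Lemma~\ref{subgd}. First I would invoke Assumption~\textbf{A3} together with Theorem~\ref{save} to fix the common lower-$\C^2$ representation $f=\max_{t\in T_f}f_t$ valid on an open set $O'\supseteq X$; since $B(y_0;\Delta)\subseteq X\subseteq O'$, each $f_t$ is $\C^2$ on a neighbourhood of this ball. By Lemma~\ref{coffee} there is a single constant $K_f\ge 0$ with $\grad f_t$ being $K_f$-Lipschitz on $O'$ for every $t\in T_f$; in particular $\grad f_t$ is $K_f$-Lipschitz on $B(y_0;\Delta)$. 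Note that $Y$ being poised centred at $y_0$ with respect to $f$ makes $Y$ poised with respect to each $f_t$ as well (poisedness depends only on the geometry of $Y$, not on the function), so the linear interpolation model $F_t$ of $f_t$ over $Y$ is well defined.

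Next I would apply Theorem~\ref{Lin_int_err} to each $f_t$ with $t\in A(y)$: this gives, for each such $t$,
\begin{equation*}
\norm{\grad f_t(y)-\grad F_t(y)}\le K_f\bigl(1+\sqrt{m}\,\norm{\hat L^{-1}}/2\bigr)\,\Delta,
\end{equation*}
where $\hat L=\hat L(Y)$ is exactly the matrix in Theorem~\ref{Lin_int_err}, which depends only on $Y$ and is therefore the same for all $t$. Now set $v:=\sum_{i=1}^r\lam_i\grad f_{t_i}(y)$. Since $(t_1,\dots,t_r)\in A(y)^r$ and $\lam\in\Cr{r}$, Lemma~\ref{subgd} (applicable because $f$ is convex by \textbf{A1} and has the lower-$\C^2$ representation from \textbf{A3}) tells us that $v\in\conv\{\grad f_t(y)\mid t\in A(y)\}=\pt f(y)$. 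Then estimate
\begin{equation*}
\norm{V-v}=\Bigl\|\sum_{i=1}^r\lam_i\bigl(\grad F_{t_i}(y)-\grad f_{t_i}(y)\bigr)\Bigr\|\le\sum_{i=1}^r\lam_i\,\norm{\grad F_{t_i}(y)-\grad f_{t_i}(y)}\le\sum_{i=1}^r\lam_i\,K_f\bigl(1+\sqrt{m}\,\norm{\hat L^{-1}}/2\bigr)\Delta,
\end{equation*}
and since $\sum_i\lam_i=1$ the right-hand side collapses to $K_f(1+\sqrt{m}\,\norm{\hat L^{-1}}/2)\,\Delta$, which is the claimed bound.

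The main obstacle, and the only genuinely delicate point, is justifying that one may use a \emph{single} Lipschitz constant $K_f$ and a \emph{single} interpolation matrix $\hat L$ across all the pieces $f_{t_i}$ simultaneously — i.e., that the bound does not degrade as $t$ ranges over the (possibly infinite) active set $A(y)$. This is exactly what Assumption~\textbf{A3}, Theorem~\ref{save}, and Lemma~\ref{coffee} are set up to provide: the common representation with jointly continuous $\grad^2 f_t$ on the compact set $T_f\times X$ yields a uniform $K_f$, and the interpolation geometry $\hat L(Y)$ is intrinsic to $Y$. One should also remark that $A(y)\subseteq T_f$ is nonempty (compactness of $T_f$ and continuity of $t\mapsto f_t(y)$) so the construction of $v$ is legitimate. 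Everything else is the triangle inequality and convexity of the norm, as above.
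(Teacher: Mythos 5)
Your proof is correct and takes essentially the same route as the paper's: identify $v=\sum_{i=1}^r\lam_i\grad f_{t_i}(y)\in\pt f(y)$ via Lemma~\ref{subgd}, apply Theorem~\ref{Lin_int_err} to each piece $f_{t_i}$ on $O'$ with the uniform constant $K_f$ from Lemma~\ref{coffee}, and finish with the triangle inequality and $\sum_i\lam_i=1$. The additional points you spell out (poisedness of $Y$ is independent of the function being interpolated, and $K_f$ and $\hat L$ are uniform over the active pieces) are left implicit in the paper but are exactly the right things to check.
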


\begin{proof}
 By assumption $V=\sum_{i=1}^{r} \lam_i \grad F_{t_i}(y)$. Lemma~\ref{subgd} implies that 
 $v=v(y):=\sum_{i=1} ^{r}\lam_i \grad f_{t_i}(y)\in \pt f(y)$. Using the
 triangle inequality, the error bound given in Theorem~\ref{Lin_int_err}
 (applied to $O'$ instead of $O$) and Lemma~\ref{coffee}, we have
\begin{align*}
\norm{V-v}&=\norm{\sum_{i=1}^{r}\lam_i( \grad F_{t_i}(y)- \grad f_{t_i}(y))}
\le \sum_{i=1}^{r}\lam_i \norm{\grad F_{t_i}(y)- \grad f_{t_i}(y)}\\
&\le \sum_{i=1}^{r}\lam_i K_f (1+\sqrt{m}\norm{\hat{L}^{-1}}/2) 
= K_f(1+\sqrt{m}\norm{\hat{L}^{-1}}/2) ~\Delta,
\end{align*}
as claimed.
\end{proof}

Our next corollary relates Theorem~\ref{errbdd} to the algorithm presented later.  Let us note that the function $E$ in Corollary~\ref{cor:11} is the same as the one used in the algorithm.  We also note that, although in Corollary~\ref{cor:11} we provide the error bound for the approximate gradient function in a general format, in practice we shall use $x=y_0$.

\begin{cor}\label{cor:11}
Suppose that {\bf A1}, {\bf A2}, {\bf A3} and {\bf A4} hold. 
Let $Y=\tuple$ be a poised tuple of sample points centered at $y_0\in X$ with respect to $f$. 
Set $ \Delta=\ds\max _{1\le i\le m}\norm{\spy{i}-\spy{0}}$ and suppose that
$B (\spy[y]{0};\Delta)\subseteq X$. For every $x\in B(\sby[y]{0};\Delta)$,
let $\bk{t_1,\ldots,t_{r(x)}}\in A_f(x)^{r(x)}$, $\lam\in \Cr{r(x)}$,
$\bk{\bar{t}_1,\ldots,\bar{t}_{\bar{r}(x)}}\in A_g(x)^{\bar{r}(x)}$,
$\bar{\lam}\in \Cr{\bar{r}(x)}$, 
\begin{align*}
v_{f}(x)&=\sum_{i=1} ^{r(x)}\lam_i \grad f_{t_i}(x)\in \pt f(x),
&V_{f} (x)&=\sum_{i=1} ^{r(x)}\lam_i \grad F_{t_i}(x), \\
v_{g}(x)&=\sum_{i=1} ^{\bar{r}(x)}\bar{\lam}_i \grad g_{\bar{t}_i}(x)\in \pt g(x), 
&V_{g} (x)&=\sum_{i=1} ^{\bar{r}(x)}\bar{\lam}_i \grad G_{\bar{t}_i}(x),
\end{align*}
and 
\begin{equation}\label{grad}
e(x):=\begin{cases}
v_f (x) ,~~~& \text{if $g(x) \le \eps$}, \\
v_g (x),~~~& \text{otherwise},
\end{cases}
\end{equation}
and
\begin{equation}\label{aproxgrad}
E(x):= \begin{cases}
V_f (x),~~~& \text{if $g(x)  \le \eps$}\\
V_{g} (x),~~~ & \text{otherwise}.
\end{cases}
\end{equation}
Then:
\begin{enumerate}[\rm(i)]
\sepp
\item 
 The following error bound holds
\begin{equation}\label{EB}
\norm{e(x)-E(x)}\le \kappa~ \Delta , \text{~~~for all~~~ } x\in B(\sby[y]{0};\Delta),
\end{equation} 
where $\kappa=\max\{K_f,K_g\}(1+\sqrt{m}\norm{\hat{L}^{-1}}/2)$, 
$K_f$ is defined as in Lemma~\ref{coffee} and
$K_g$ is obtained by replacing $f$ by $g$ in Lemma~\ref{coffee}, 
and $\hat{L}$ is as defined in Theorem~\ref{Lin_int_err}.
\item 
 The function $E$ induced by \eqref{aproxgrad} satisfies
\begin{equation}\label{EBB}
\norm{E(x)}\le  
\max\stb{L_f,L_g}+\kappa~\Delta, \text{~~~for all~~~ } x\in B(\sby[y]{0};\Delta),
\end{equation}
where $L_f$ and $L_g$ are defined as in Lemma~\ref{bdsbgd}. 
\end{enumerate}
\end{cor}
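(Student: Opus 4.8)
The plan is to deduce Corollary~\ref{cor:11} directly from Theorem~\ref{errbdd} by a simple case analysis, treating the two branches of the definitions \eqref{grad} and \eqref{aproxgrad} separately. Fix $x\in B(\sby[y]{0};\Delta)$.

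For part (i): if $g(x)\le\eps$, then by definition $e(x)=v_f(x)$ and $E(x)=V_f(x)$, so applying Theorem~\ref{errbdd} to $f$ (with the tuple $(t_1,\ldots,t_{r(x)})\in A_f(x)^{r(x)}$ and the weights $\lam\in\Cr{r(x)}$) gives $\norm{e(x)-E(x)}=\norm{V_f(x)-v_f(x)}\le K_f(1+\sqrt{m}\norm{\hat L^{-1}}/2)\,\Delta\le\kappa\,\Delta$. If instead $g(x)>\eps$, then $e(x)=v_g(x)$ and $E(x)=V_g(x)$; here I would apply the analogue of Theorem~\ref{errbdd} with $f$ replaced by $g$ — which is legitimate because Assumptions \textbf{A1}, \textbf{A2}, \textbf{A3} are symmetric in $f$ and $g$ — to obtain $\norm{e(x)-E(x)}\le K_g(1+\sqrt{m}\norm{\hat L^{-1}}/2)\,\Delta\le\kappa\,\Delta$. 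In both cases the bound is at most $\kappa\,\Delta$ with $\kappa=\max\{K_f,K_g\}(1+\sqrt{m}\norm{\hat L^{-1}}/2)$, which proves \eqref{EB}.

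For part (ii): use the triangle inequality $\norm{E(x)}\le\norm{e(x)}+\norm{E(x)-e(x)}$. The second term is bounded by $\kappa\,\Delta$ by part (i). For the first term, observe that $e(x)\in\pt f(x)$ when $g(x)\le\eps$ and $e(x)\in\pt g(x)$ otherwise (this is exactly what the inclusions in the displayed definitions of $v_f$ and $v_g$ assert, via Lemma~\ref{subgd}); since $x\in B(\sby[y]{0};\Delta)\subseteq X$, Lemma~\ref{bdsbgd} gives $\norm{e(x)}\le\max\{L_f,L_g\}$. Adding the two estimates yields $\norm{E(x)}\le\max\{L_f,L_g\}+\kappa\,\Delta$, which is \eqref{EBB}.

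I do not anticipate a genuine obstacle here — the corollary is essentially a packaging of Theorem~\ref{errbdd} plus Lemma~\ref{bdsbgd}. The only point requiring a word of care is the invocation of ``Theorem~\ref{errbdd} with $f$ replaced by $g$'': one should note that the hypotheses of that theorem (the poisedness of $Y$ with respect to $f$, the inclusion $B(\sby[y]{0};\Delta)\subseteq X$, and Assumptions \textbf{A1}--\textbf{A4}) apply verbatim to $g$ as well, so the constant $K_g$ from the $g$-version of Lemma~\ref{coffee} controls $\norm{V_g(x)-v_g(x)}$ in exactly the same way. It is also worth remarking that $\hat L=\hat L(Y)$ is the same matrix in both cases, since it depends only on the sample tuple $Y$ and not on which function is being interpolated, so a single $\kappa$ suffices for both branches.
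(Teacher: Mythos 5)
Your proof is correct and follows essentially the same route as the paper's: part (i) is a direct case analysis applying Theorem~\ref{errbdd} to $f$ and to $g$ on the two branches of \eqref{grad}--\eqref{aproxgrad}, and part (ii) is the triangle inequality combined with the subgradient bound of Lemma~\ref{bdsbgd} and the estimate from part (i). Your added remarks that poisedness and the matrix $\hat L$ depend only on the tuple $Y$ (not on which function is interpolated) are correct and merely make explicit what the paper leaves implicit.
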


\begin{proof}
{\rm(i)}: 
Use \eqref{grad} and \eqref{aproxgrad},
and apply Theorem~\ref{errbdd} to $f$ and $g$.
{\rm(ii)}: 
Let $x\in X$. Using the triangle inequality, \eqref{gradbd}, and \eqref{EB} we have
$\norm{E(x)}\le \norm{e(x)}+\norm{e(x)-E(x)}
\le  \max\stb{L_f,L_g}+\kappa~\Delta$.
\end{proof}

\section{Algorithm and Discussion}\label{s:4}
In this section we introduce the Derivative-Free $\eps-$CoMirror algorithm and present a convergence analysis.

\begin{framed}

\begin{description}
\item[\underline{The Derivative-Free $\eps-$CoMirror algorithm ($\text{DFO}_\eps\text{CM}$)}]
\item[Initialization]  Input
\begin{itemize}
\sepp
 \item $x_0 \in X$, 
 \item $M\in \RR_{++}$.
 \end{itemize}
\item[General step] for every $k\in\stb{1,2,\ldots}$
\begin{itemize}
\sepp
\item Select
 \begin{equation}\label{delta:def}
0< \Delta_k \le \ds\frac{1}{\sqrt{k+1}}~.
\end{equation}
\item

Select a poised tuple $\spy[Y]{k}=\tuple$ centred at $y_0$ with respect to $f$ such that the set $\stb{y_0,y_1,\ldots,y_m}\subseteq B(x_k,\Delta_k)$, $\sby[x]{k}=y_0$ and $\norm{\hat{L}_k^{-1}}\le M$, where $\hat{L}_k=\hat{L}(\spy[Y]{k})$ is as defined in Theorem~\ref{Lin_int_err}. 

\item Set
\begin{equation}\label{2.2}
x_{k+1}=\amin_{x\in X} \{\innp{t_kE_k-\grad \omega(x_k),x}+\omega(x)\},
\end{equation}
where 
\begin{equation}\label{aproxgd}
E_k:=\begin{cases}
V_{f}(x_k),~~~& \text{if $g(\sby[x]{k})  \le \eps$;}\\
V_{g}(x_k),~~~ & \text{otherwise},
\end{cases}
\end{equation}
\begin{equation}\label{2.4}
t_k=\frac{\sqrt{\Theta \alpha}  }{\norm{E_k}\sqrt{k}}~,
\end{equation}
and where $\alpha >0$ is the strong convexity parameter of the strongly
convex function $\om\colon O\to\RR$, $\Theta$ is the corresponding 
Bregman diameter of the set $X$, and $V_{f}$ and $V_{g}$ are defined as in Corollary~\ref{cor:11}.
\end{itemize}
\end{description}
\end{framed}

\begin{rem}~
\begin{enumerate}[\rm(i)]

\item In generating the points of the tuple $\spy[Y]{k}\subseteq \matt$ 
we need to check that $\norm{\hat{L}_k^{-1}}\le M$. 
If this inequality fails, then we resample. It is always possible to
generate the tuple $\spy[Y]{k}$ for all $k\in \NN$  provided that $M$ is
set to be sufficiently large {\rm\cite{Wschebor2004}}. For a detailed discussion on how to choose $M$ we refer the reader to {\rm\cite{conn-scheinberg-vicente-2008}}.

\item
The poised tuple $\spy[Y]{k}=\tuple$ must satisfy
$\ds\max_{i\in \{1,\ldots,m\}}\norm{y_i-\sby[x]{k}}\le \Delta_k$ to
guarantee that the error bound in Theorem~\ref{errbdd} still holds true.
This does not create a conflict {\rm(i)} because by the definition of the
matrix $\hat{L}$ in \eqref{mat_L_def}, the value of $\norm{\hat{L}^{-1}}$ remains unchanged under scaling or shifting. 

\item The update of $x_k$ in \eqref{2.2} is well defined, 
since that the function $\innp{t_k E_k-\grad \omega(x_k),\cdot}+\omega$ is strongly convex and differentiable over $X$, and therefore it has a unique minimizer over $X$.

\item The step length $t_k$ is well defined for all $k\in\stb{1,2,\ldots}$
except when $E_k=0$ in which case either we have a local minimum, or we change the search radius $\Delta_k$ to get a better approximation of the gradients. 
Moreover, the Bregman diameter $\Theta$ is finite by Lemma~\ref{diam}. Finally, by Lemma~\ref{equi}~{\rm (ii)}, we have that $D_\om(x,y)\ge \frac{\alpha}{2}\norm{x-y}^2$, 
and therefore, since $X$ is not a singleton, the  Bregman diameter $\Theta$ is strictly positive.  

\item In general, the Bregman diameter $\Theta $ is not easy to calculate. However, if the set $X$ is simple and the function $\om $ is separable, calculating $\Theta$ becomes simpler. 
For example, if $X=[\alpha_1,\beta_1]\times\cdots\times [\alpha_m,\beta_m]$ 
and $\om(x)=\sum_{i=1}^{m}\om_i(x_i)$,
then $\Theta = \sum_{i=1}^{m} D_{\omega_i}(\alpha_i,\beta_i)$. 

\end{enumerate}

\end{rem}

\subsection{Convergence Analysis}

We devote this subsection to study the convergence of the algorithm. Lemma~\ref{key} and its proof are only a minor adaptation of \cite[Lemma~2.2]{BBTGBT2010}. 
For the sake of completeness, we include the adapted proof. 

\begin{lem} \label{key}
Let $\bk{x_k}_{k\in \NN} $ be the sequence generated by \DFOCM. Let $i<j$ be two strictly positive integers. Then for all $k\in \stb{1,2,\ldots}$

\begin{equation}\label{2.6}
\displaystyle \sum_{k=i}^j t_k\innp{E_k,x_k-u} \le \Theta +\frac{1}{2\alpha} \displaystyle \sum_{k=i}^j t_k^2 {\norm{E_k}}^2,
\end{equation}
for every $u \in X$.
\end{lem}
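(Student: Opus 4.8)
The plan is to follow the standard mirror-descent / projected-subgradient telescoping argument, adapted to the CoMirror setting exactly as in \cite[Lemma~2.2]{BBTGBT2010}. The central object is the Bregman distance $D_\om(u,x_k)$ to an arbitrary fixed point $u\in X$, and the goal is to bound a single term $t_k\innp{E_k,x_k-u}$ by the decrement $D_\om(u,x_k)-D_\om(u,x_{k+1})$ plus an error term of order $t_k^2\norm{E_k}^2$, then sum from $i$ to $j$.

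First I would analyze the update \eqref{2.2}. Writing $x_{k+1}=\amin_{x\in X}\{\innp{t_kE_k-\grad\om(x_k),x}+\om(x)\}$, the first-order optimality condition at $x_{k+1}$ gives $\innp{t_kE_k-\grad\om(x_k)+\grad\om(x_{k+1}),u-x_{k+1}}\ge 0$ for all $u\in X$, i.e. $t_k\innp{E_k,x_{k+1}-u}\le\innp{\grad\om(x_k)-\grad\om(x_{k+1}),x_{k+1}-u}$. The key algebraic identity is the three-point (or ``law of cosines'') identity for Bregman distances: $\innp{\grad\om(x_k)-\grad\om(x_{k+1}),x_{k+1}-u}=D_\om(u,x_k)-D_\om(u,x_{k+1})-D_\om(x_{k+1},x_k)$, which follows directly from the definition of $D_\om$. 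Combining these, $t_k\innp{E_k,x_{k+1}-u}\le D_\om(u,x_k)-D_\om(u,x_{k+1})-D_\om(x_{k+1},x_k)$. Then I split $t_k\innp{E_k,x_k-u}=t_k\innp{E_k,x_{k+1}-u}+t_k\innp{E_k,x_k-x_{k+1}}$ and bound the last inner product by Cauchy--Schwarz together with Young's inequality: $t_k\innp{E_k,x_k-x_{k+1}}\le \frac{1}{2\alpha}t_k^2\norm{E_k}^2+\frac{\alpha}{2}\norm{x_k-x_{k+1}}^2$. By strong convexity of $\om$ and Lemma~\ref{equi}(ii), $D_\om(x_{k+1},x_k)\ge\frac{\alpha}{2}\norm{x_{k+1}-x_k}^2$, so the $\frac{\alpha}{2}\norm{x_k-x_{k+1}}^2$ term is absorbed by $-D_\om(x_{k+1},x_k)$. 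This yields the one-step inequality
\[
t_k\innp{E_k,x_k-u}\le D_\om(u,x_k)-D_\om(u,x_{k+1})+\frac{1}{2\alpha}t_k^2\norm{E_k}^2.
\]

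Next I would sum this inequality for $k=i,\dots,j$. The Bregman-distance terms telescope to $D_\om(u,x_i)-D_\om(u,x_{j+1})\le D_\om(u,x_i)\le\Theta$, using $D_\om\ge 0$ (convexity of $\om$) and the definition \eqref{bregdm} of the Bregman diameter $\Theta$; note $x_i\in X$. The error terms add up to $\frac{1}{2\alpha}\sum_{k=i}^j t_k^2\norm{E_k}^2$, giving exactly \eqref{2.6}.

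I do not expect a serious obstacle; the only points requiring care are (a) invoking the finiteness and the correct defining inequality for $\Theta$ — which are already established in Lemma~\ref{diam} and Remark after it, and via Lemma~\ref{equi}(ii) — and (b) making sure the three-point identity is applied with the arguments of $D_\om$ in the order matching the definition $D_\om(u,v)=\om(u)-\om(v)-\innp{\grad\om(v),u-v}$, since the Bregman distance is not symmetric. The Young's-inequality constant $\frac{1}{2\alpha}$ must be chosen precisely so that the quadratic remainder matches the $\frac{\alpha}{2}$ lower bound on $D_\om(x_{k+1},x_k)$; this is the one spot where the strong-convexity parameter $\alpha$ enters, and it is what produces the $\frac{1}{2\alpha}$ prefactor in \eqref{2.6}.
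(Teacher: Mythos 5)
Your proposal is correct and follows essentially the same route as the paper's proof: the optimality condition for \eqref{2.2}, the three-point identity for $D_\om$, the decomposition of $t_k\innp{E_k,x_k-u}$ via $x_{k+1}$, absorption of the $\frac{\alpha}{2}\norm{x_k-x_{k+1}}^2$ term using Lemma~\ref{equi}(ii), and telescoping against $\Theta$. Your use of Young's inequality is just a rephrasing of the paper's step of maximizing the quadratic $t_k\norm{E_k}s-\frac{\alpha}{2}s^2$, so there is no substantive difference.
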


\begin{proof}
By the optimality condition in $\eqref{2.2}$ we have

\begin{equation*}
\innp{t_kE_k-\grad \omega(x_k)+\grad \omega(x_{k+1}),u-x_{k+1}} \ge 0 \text{~for every~} u \in X.
\end{equation*}
Hence,
\begin{equation}\label{2.7}
t_k \innp{E_k,u-x_{k+1}} \ge \innp{\grad \om(x_k)-\grad \om (x_{k+1}),u-x_{k+1} } \text{~for every~} u \in X.
\end{equation}

The three-point property of the Bregman distance {\rm\cite[Lemma 3.1]{Chen-Teboulle}} tells us
\begin{equation}\label{2.8}
D_{\om}(u,x_{k+1})-D_{\om}(u,x_k)+D_{\om}(x_{k+1},x_k)=\innp{\grad \om(x_k)-\grad \om (x_{k+1}),u-x_{k+1} }.
\end{equation}

Combining (\ref{2.7}) and (\ref{2.8}) yields
\begin{equation*}
t_k\innp{E_k,u-x_{k+1}} \ge D_{\om}(u,x_{k+1})-D_{\om}(u,x_k)+D_{\om}(x_{k+1},x_k).
\end{equation*}
That is 
\begin{equation*}
 t_k\innp{E_k,x_{k+1}-u} \le D_{\om}(u,x_k)-D_{\om}(x_{k+1},x_k)-D_{\om}(u,x_{k+1}).
\end{equation*}
Adding $t_k\innp{E_k,x_k-x_{k+1}}$ to both sides of the above inequality and using Lemma~\ref{equi} {\rm(ii)} and the Cauchy-Schwarz inequality we get
\begin{align*}
t_k\innp{E_k,x_k-u}& \le  D_{\om}(u,x_k)-D_{\om}(u,x_{k+1}) -D_{\om}(x_{k+1},x_k)+t_k\innp{E_k,x_k-x_{k+1}}\\
&\le  D_{\om}(u,x_k)-D_{\om}(u,x_{k+1}) -\frac{\alpha}{2}{\norm{x_k-x_{k+1}}}^2+t_k{\norm{E_k}}\norm{x_k-x_{k+1}}.
\end{align*}
Notice that, $t_k{\norm{E_k}}\norm{x_k-x_{k+1}}-\frac{\alpha}{2}{\norm{x_k-x_{k+1}}}^2$ is a quadratic function of $\norm{x_k-x_{k+1}}$ that has a maximum value of $\frac{1}{2 \alpha}t_k^2{\norm{E_k}}^2$,  
i.e., $t_k{\norm{E_k}}\norm{x_k-x_{k+1}}-\frac{\alpha}{2}{\norm{x_k-x_{k+1}}}^2 \le \frac{1}{2 \alpha}t_k^2{\norm{E_k}}^2$. This yields
\begin{equation*}
t_k\innp{E_k,x_k-u} \le D_{\om}(u,x_k)-D_{\om}(u,x_{k+1})+ \frac{1}{2 \alpha}t_k^2{\norm{E_k}}^2.
\end{equation*}
Summing the last inequality over $k\in\stb{i,i+1,\ldots,j} $ we obtain

\begin{equation*}
\displaystyle \sum_{k=i}^{j}t_k\innp{E_k,x_k-u}\le D_{\om}(u,x_i)-D_{\om}(u,x_{j+1})+ \displaystyle \sum_{k=i}^{j} \frac{1}{2 \alpha}t_k^2{\norm{E_k}}^2.
\end{equation*}

Using the definition of $\Theta$ we note that  $D_{\om}(u,x_i)-D_{\om}(u,x_{j+1}) \le \Theta$, from which we get (\ref{2.6}).
\end{proof}

The following theorem presents the efficiency estimate for the Derivative-Free $\eps-$CoMirror method. In proving Theorem~\ref{t:conv} we are motivated by the techniques used in the proof of \cite[Theorem~ 2.1]{BBTGBT2010}.
Given $n\in \NN$, we denote the set of indices of the $\eps-$feasible solutions among the first $n$ iterations by
\begin{equation*}
I_n^\eps=\stb{k\in\stb{1,2,...,n}:g(x_k)\le \eps}. \end{equation*}

\begin{thm}\label{t:conv}
Suppose that Assumptions {\bf A1}, {\bf A2}, {\bf A3} and {\bf A4} hold. 
Let $\eps>0$ and let  $\bk{x_k}_{k\in \NN} $ be the sequence generated by
\DFOCM. Denote by $\fopt$ the optimal function value of \eqref{P}. Then for every $n\in \stb{4,5,\ldots}$
	\[\min  \left\{ \displaystyle \min_{k \in I^\eps _n}\bk{f(x_k)-\fopt},\eps\right\} \le  \frac{C}{\sqrt{n}},\]
where 
	\[C=\displaystyle  2\sqrt{\frac{\Theta}{\alpha}} \max \stb{\kappa_1,\kappa_2}\ds \frac{1+ \ln (2)}{2-\sqrt{2}} + \kappa_2~\Omega,\]
	\[\kappa_1= \max\stb{L_f,L_g},\]
	\[\kappa_2=K(1+\sqrt{m}M/2),\]
	\[\Omega=\max_{x,y\in X} \norm{x-y} ,\]
 $L_f$ and $L_g$ are as defined in \eqref{gradbd}, $K$ is as defined in Corollary~\ref{cor:11}, and $M>0$ satisfies that $\norm{\hat{L}_k^{-1}}\le M$ for all $k\in \stb{1,2,\ldots}$.
\end{thm}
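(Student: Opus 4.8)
The plan is to mimic the argument of \cite[Theorem~2.1]{BBTGBT2010}, the only essential new ingredient being the approximate-subgradient error bound of Corollary~\ref{cor:11}. Fix $n\in\{4,5,\ldots\}$, let $x^\ast\in X$ be an optimal solution of $(P)$ (available by \textbf{A4}), so that $g(x^\ast)\le 0$ and $f(x^\ast)=\fopt$, and write $\mu$ for the left-hand side of the asserted inequality, i.e.
\[
\mu:=\min\Big\{\min_{k\in I_n^\eps}\big(f(x_k)-\fopt\big),\ \eps\Big\}.
\]
For each $k$ let $e_k$ denote the exact subgradient associated with $E_k$ in the sense of Corollary~\ref{cor:11} with $x=x_k$: $e_k=v_f(x_k)\in\pt f(x_k)$ if $g(x_k)\le\eps$, and $e_k=v_g(x_k)\in\pt g(x_k)$ otherwise.

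First I would establish the pointwise estimate $\innp{e_k,x_k-x^\ast}\ge\mu$ for every $k\in\{1,\ldots,n\}$. If $g(x_k)\le\eps$ then $k\in I_n^\eps$, and convexity of $f$ gives $\innp{e_k,x_k-x^\ast}\ge f(x_k)-f(x^\ast)=f(x_k)-\fopt\ge\mu$; if $g(x_k)>\eps$, then convexity of $g$ together with $g(x^\ast)\le 0$ gives $\innp{e_k,x_k-x^\ast}\ge g(x_k)-g(x^\ast)\ge g(x_k)>\eps\ge\mu$. Next, writing $E_k=e_k+(E_k-e_k)$, applying Cauchy--Schwarz, using $\norm{x_k-x^\ast}\le\Omega$, and invoking Corollary~\ref{cor:11}(i) with $\norm{\hat L_k^{-1}}\le M$ (which makes the constant $\kappa$ there bounded by $\kappa_2$), I obtain
\[
\innp{E_k,x_k-x^\ast}\ \ge\ \mu-\kappa_2\,\Omega\,\Delta_k\qquad(1\le k\le n).
\]

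Then I would apply Lemma~\ref{key} with $u=x^\ast$ over the block $i:=\lceil n/2\rceil\le k\le n=:j$, sum the previous inequality against the weights $t_k>0$, combine, and rearrange to
\[
\mu\ \le\ \frac{\Theta+\frac{1}{2\alpha}\sum_{k=i}^{n}t_k^2\norm{E_k}^2}{\sum_{k=i}^{n}t_k}\ +\ \kappa_2\,\Omega\,\frac{\sum_{k=i}^{n}t_k\Delta_k}{\sum_{k=i}^{n}t_k}.
\]
The last term is a $t_k$-weighted average of the $\Delta_k$, hence at most $\kappa_2\Omega\max_{i\le k\le n}\Delta_k\le\kappa_2\Omega/\sqrt{i+1}=O(1/\sqrt n)$. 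For the first term I substitute $t_k=\sqrt{\Theta\alpha}/(\norm{E_k}\sqrt k)$, which makes $t_k^2\norm{E_k}^2=\Theta\alpha/k$ exactly, and I use Corollary~\ref{cor:11}(ii) with $\Delta_k\le 1/\sqrt{k+1}\le 1$ to get $\norm{E_k}\le\kappa_1+\kappa_2\le 2\max\{\kappa_1,\kappa_2\}$, hence $t_k\ge\sqrt{\Theta\alpha}/(2\max\{\kappa_1,\kappa_2\}\sqrt k)$. Finally, the elementary estimates $\sum_{k=\lceil n/2\rceil}^{n}1/k\le 2\ln 2$ and $\sum_{k=\lceil n/2\rceil}^{n}1/\sqrt k\ge(2-\sqrt 2)\sqrt n$ (valid for $n\ge 4$) bound the first term by $2\sqrt{\Theta/\alpha}\,\max\{\kappa_1,\kappa_2\}\,\frac{1+\ln 2}{2-\sqrt 2}\,\frac{1}{\sqrt n}$; collecting the two contributions yields $\mu\le C/\sqrt n$.

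The routine parts are the convexity and Cauchy--Schwarz manipulations and the harmonic-sum bookkeeping (the restriction $n\ge 4$ is exactly what is needed to bound the tail sums by the stated constants). The one genuinely delicate point is the treatment of the perturbation term: since $\norm{E_k}$ can be arbitrarily small and appears in the denominator of $t_k$, one cannot usefully bound $t_k\Delta_k$ termwise, and the argument instead exploits that $\sum t_k\Delta_k/\sum t_k$ is a convex combination of the $\Delta_k$, controlled by the prescribed decay $\Delta_k\le1/\sqrt{k+1}$. A secondary subtlety is the case $I_n^\eps=\varnothing$, where the claim reduces to $\eps\le C/\sqrt n$; this is still covered because then $g(x_k)>\eps$ for all $k$, so $\innp{e_k,x_k-x^\ast}\ge\eps=\mu$ for every $k$ and the same computation applies verbatim.
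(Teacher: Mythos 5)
Your proposal is correct and follows essentially the same route as the paper's proof: the same case analysis via convexity and Corollary~\ref{cor:11} yielding $\innp{E_k,x_k-\xopt}\ge \mu-\kappa_2\,\Omega\,\Delta_k$, the same application of Lemma~\ref{key} with $u=\xopt$ over a tail block starting near $n/2$, and the same step-size substitution and harmonic-sum estimates. The only cosmetic differences --- summing the lower bound against the weights $t_k$ and dividing, rather than first passing to $\min_l\bigl(\innp{E_l,x_l-\xopt}+\kappa_2\Omega\Delta_l\bigr)$ as the paper does, and starting the block at $\lceil n/2\rceil$ rather than $\lfloor n/2\rfloor$ --- do not change the argument, and the ceiling versions of the Appendix inequalities you invoke do hold for $n\ge 4$.
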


\begin{proof}
Using assumption {\bf A4}, suppose that $\xopt$ is an optimal solution
of (\ref{P}). Fix $n\in \stb{1,2,\dots}$, and $k\in \stb{1,2,\ldots,n}$. 
We begin by considering the following two cases:
\begin{description}
\item[Case I:]
 $ k \in I^ {\eps} _n  $. 
Then $g(x_k)\leq\eps$, 
and, by $\eqref{aproxgd}$, $\eqref{grad}$, and \eqref{aproxgrad} we have $e_k:=e(x_k)=v_f(x_k)\in \pt f(\sby[x]{k})$ and $E_k:=E(x_k)= V_f({x_k}) $, and hence 
\begin{equation*}
 f(x_k)\le f(\xopt)+\innp{e_k,x_k-\xopt} .
\end{equation*}
Therefore, using Cauchy-Schwarz inequality and the error bound in equation $\eqref{EB}$
\begin{align*}
f(x_k)&\le f(\xopt)+\innp{E_k,x_k-\xopt}+\innp{e_k-E_k,x_k-\xopt}\\
&\le f(\xopt)+\innp{E_k,x_k-\xopt}+\norm{e_k-E_k}\norm{x_k-\xopt}\\
&\le f(\xopt)+\innp{E_k,x_k-\xopt}+\kappa_2~ \Delta_k~\Omega.
\end{align*}
Hence
\begin{equation} \label{2.13}
f(x_k)-f(\xopt)\le \innp{E_k,x_k-\xopt}+\kappa_2~ \Delta_k~\Omega. 
\end{equation}
\item[Case II:]
$ k \not\in I^ {\eps} _n$. Then $g(x_k) > \eps$. 
Using $\eqref{aproxgd}$, $\eqref{grad}$, and \eqref{aproxgrad} we have $  e_k=v_g(x_k) \in \pt g (\sby[x]{k})  $ and $ E_k=V_g(x_k)$, and hence
\begin{equation*}
  g(x_k) \le g(\xopt)+\innp{e_k,x_k-\xopt}.
\end{equation*}
Since $g(\xopt) \le 0$ we have
\begin{align*}
\eps &<  g(x_k) \\
&\le  g(\xopt)+\innp{e_k,x_k-\xopt}\\
& \le\innp{e_k,x_k-\xopt}= \innp{E_k,x_k-\xopt} +\innp{e_k-E_k,x_k-\xopt} .
\end{align*}
Hence, using Cauchy-Schwarz inequality, the assumption that $\norm{\hat{L}_k^{-1}}\le M$ for all $k\in \stb{1,2,\ldots}$, and the error bound in equation $\eqref{EB}$ we have
\begin{align}
\eps& \le  \innp{E_k,x_k-\xopt} +\norm{e_k-E_k}\norm{x_k-\xopt}\nonumber\\
&\le \innp{E_k,x_k-\xopt} +\kappa_2~ \Delta_k~\Omega\label{2.14}.
\end{align}
\end{description}
By combining Case I and Case II, we have 
\begin{equation}\label{2.15}
\innp{E_k,x_k-\xopt}+\kappa_2~ \Delta_k~\Omega \ge
\begin{cases}
 f(x_k) -f(\xopt), & \text{~~if~~} k \in I_n^\eps, \\
  \eps, & \text{~~if~~} k \not\in I_n^\eps.
\end{cases} 
\end{equation}

Using (\ref{2.15}) we have for all $1\le l\le n$, with $\Delta_l\le 1/ \sqrt{l+1}$
\begin{equation*}
\min \{ {\min_{k\in I_n^{\eps}} \bk{f(x_k)-f(\xopt)},\eps} \} \le \innp{E_l,x_l-\xopt}+\kappa_2~ \Delta_l~\Omega.
\end{equation*}
Let $n_0\in \stb{1,2,\ldots,n}$, then using \eqref{delta:def}
\begin{align}
\min \stb{ \min_{k\in I_n^{\eps}}\bk{ f(x_k)-f(\xopt)},\eps } 
&\le \dsm_{n_0 \le l \le n} \bk{ \innp{E_l,x_l-\xopt} +\kappa_2~ \Delta_l~\Omega} \nonumber \\  
&\le \dsm_{n_0 \le l \le n} \bk{ \innp{E_l,x_l-\xopt} +\kappa_2~ \Omega \ds\max_{n_0 \le l \le n}\Delta_l }  \nonumber\\ 
& \le \dsm_{n_0 \le l \le n} \bk{ \innp{E_l,x_l-\xopt} }+\frac{\kappa_2~ \Omega}{\sqrt{n_0+1}} \label{eq:1}.
\end{align}

Substituting $u=\xopt,~i=n_0,~j=n$ in   Lemma~\ref{key} we see that 
\begin{equation}\label{2.11}
\dss _{k=n_0}^n t_k\innp{E_k,x_k-\xopt} \le \Theta +\dsf{1}{2\alpha} \dss_{k=n_0}^n t_k^2{\norm{E_k}}^2.
\end{equation}
On the other hand, since $X$ is not a singleton, we have $t_k>0$ for every $k\in\stb{1,2,\dots,n}$, and thus
\begin{equation}\label{2.11:r}
\dss_{k=n_0}^n  t_k\innp{E_k,x_k-\xopt} \ge \bk{\dsm_{n_0\le k \le n} \innp{E_k,x_k-\xopt}} \dss_{k=n_0} ^n t_k.
\end{equation}

Combining \eqref{2.11} and \eqref{2.11:r} yields
\begin{equation}\label{2.12}
 \dsm_{n_0\le k \le n} \innp{E_k,x_k-\xopt} \le \frac{ \Theta +\dsf{1}{2\alpha} \dss_{k=n_0}^n t_k^2{\norm{E_k}}^2}{\dss_{k=n_0} ^n t_k} .
\end{equation}

Using \eqref{2.4}, we have
\begin{equation}\label{ine:1}
 \dss_{k=n_0}^n t_k^2{\norm{E_k}}^2=\Theta\alpha\dss_{k=n_0}^n\frac{1}{k}, 
 \end{equation}

 and  
 \begin{equation}\label{w:geded}
 \dss_{k=n_0} ^n t_k=\sqrt{\Theta \alpha}\dss_{k=n_0} ^n \frac{1}{\norm{E_k} \sqrt{k}}.
 \end{equation}
We recall that $\norm{\hat{L}_k^{-1}}\le M$ for all $k\in \stb{1,2,\ldots}$, $\kappa_1= \max\stb{L_f,L_g}$ and  $\kappa_2=K(1+\sqrt{m}M/2)$. Now, for every $k\in\stb{1,2,\dots}$ using Corollary~\ref{cor:11} and \eqref{delta:def} we have  
\begin{align}
\norm{E_k} \sqrt{k}&\le  (\kappa_1+\kappa_2~\Delta_k )\sqrt{k} 
\le \kappa_1 \sqrt{k}+\kappa_2\frac{\sqrt{k}}{\sqrt{k+1}} \nonumber\\
&\le \kappa_1 \sqrt{k}+\kappa_2
\le  \max \stb{\kappa_1,\kappa_2 }(\sqrt{k}+1) \nonumber\\
&\le  2\max \stb{\kappa_1,\kappa_2 }\sqrt{k}. \label{eq:geded}
\end{align}
Using \eqref{w:geded} and \eqref{eq:geded} we get 
\begin{equation}\label{ine:2}
\dss_{k=n_0} ^n t_k\ge\frac{\sqrt{\Theta\alpha}}{2\max \stb{\kappa_1,\kappa_2 }}\dss_{k=n_0} ^n\frac{1}{\sqrt{k} }, 
 \end{equation} 
Using equations $\eqref{ine:1}$ and $\eqref{ine:2}$, inequality \eqref{2.12} becomes
\begin{equation}\label{eq:5}
 \dsm_{n_0\le l \le n} \innp{E_k,x_k-\xopt}  \le\frac{ 2\Theta \max\stb{\kappa_1,\kappa_2}\bk{1+\frac{1}{2} \dss_{k=n_0}^n\frac{1}{k}}}{\sqrt{\Theta \alpha}\dss_{k=n_0} ^n\frac{1}{\sqrt{k} }} .
\end{equation}
Now, set $n_0=\left \lfloor n/2 \right \rfloor$. On the one hand, using \eqref{eq:5} and Lemma~\ref{inequalities} we get
\begin{equation}\label{eq:3}
\dsm_{n_0 \le l \le n} \innp{E_l,x_l-\xopt} \le \frac{C_1}{\sqrt{n}},
\end{equation}
where $C_1= 2\sqrt{\frac{\Theta}{\alpha}} \max \stb{\kappa_1,\kappa_2}\ds \frac{1+ \ln (2)}{2-\sqrt{2}}$. On the other hand, using the fact that $\left \lfloor n/2 \right \rfloor +1 > n/2$ we have
\begin{equation}\label{eq:4}
\frac{\kappa_2~\Omega}{\sqrt{n_0+1}}=\frac{\kappa_2~\Omega}{\sqrt{\left \lfloor n/2 \right \rfloor +1}}\le \frac{C_2}{\sqrt{n}} ,
\end{equation}
 where $C_2=\sqrt{2}~ \kappa_2~\Omega$.
Using \eqref{eq:3} and $\eqref{eq:4}$ we deduce that  
\begin{equation}
\min \{ {\min_{k\in I_n^{\eps}} f(x_k)-f(\xopt),\eps} \} \le \frac{C_1+C_2}{\sqrt{n}}= \frac{C}{\sqrt{n}},
\end{equation}
which completes the proof.
\end{proof}

\section{Numerical Results}\label{s:5}

In this section we provide some numerical results of the \DFOCM~ algorithm.  The \DFOCM~ algorithm was implemented in MATLAB.  To begin we examine three academic test problems from \cite{correia-matias-mestre-serodio-2009, correia-matias-mestre-serodio-2010}.  We then apply the \DFOCM~ algorithm to a simulation test problem from \cite{hare-2010}.

\subsection{Academic Test Problems}
We first consider three academic test problems from \cite{correia-matias-mestre-serodio-2009, correia-matias-mestre-serodio-2010}.  In working with these problems, we rewrite the constraint functions as a single constraint via a max function.  For example, in $\probi$ the constraint functions are rewritten as $g(x_1,x_2)=\ds\max_{1\le i\le 3} g_i(x)$, where $g_1(x_1,x_2)=-x_1$, $g_2(x_1,x_2)=x_1-1$ and $g_3(x_1,x_2)=x_2$.

\begin{enumerate}[\rm(i)]

\item $\probi$ 
\begin{align*}
(x\in \RR^2) \text{~~Minimize~~}&-x_1-2x_2\\
\text{subject to~~}&0\le x_1\le 1\\
&x_2\le 0.
\end{align*}

\item $\probii$ 
\begin{align*}
(x\in \RR^2) \text{~~Minimize~~}&6x_1^2+x_2^2-60x_1-8x_2+166\\
\text{subject to~~}&0\le x_1\le 10,\\
&0\le x_2\le 10,\\
&x_1+x_2-x_1x_2\ge 0,\\
&x_1+x_2-3\ge 0.
\end{align*}

\item $\probiii$ 
\begin{align*}
(x\in \RR^2) \text{~~Minimize~~}&7x_1^2+3x_2^2-84x_1-34x_2+300\\
\text{subject to~~}&0\le x_1\le 10,\\
&0\le x_2\le 10,\\
&x_1x_2-1\ge 0,\\
&9-x_1^2-x_2^2\ge 0.
\end{align*}
\end{enumerate}

\begin{rem}
In {\rm\cite{correia-matias-mestre-serodio-2009}} and {\rm\cite{correia-matias-mestre-serodio-2010}}, the authors mention that their algorithms could not find an optimal solution to $\probiii$.  This is due to them incorrectly stating that the optimal value is $-97.30952$.  The correct optimal value is $\fopt\approx 84.6710$, which we demonstrate below.

Define $f$, $g_1$, and $g_2$ as follows,
\[\begin{array}{rcl}
	f(x_1,x_2)&=&7x_1^2+3x_2^2-84x_1-34x_2+300,\\
	g_1(x_1,x_2)&=&1-x_1x_2\le 0, ~~\mbox{and}\\
	g_2(x_1,x_2)&=&x_1^2+x_2^2-9\le 0.
\end{array}\]
Notice that $f(x_1,x_2)=7(x_1-6)^2+3(x_2-\frac{17}{3})^2-\frac{145}{3}$, so $f$ is strictly convex. \\
The constraint set 
$\stb{(x_1,x_2)\in \RR^2\colon 0\le x_1\le 10,~0\le x_2\le 10, ~ g_1(x_1, x_2) \leq 0 \text{~and~}  g_2(x_1, x_2) \leq 0}$ is also convex. Let $a$ be the positive real root of $p(x)=16 x^4-336x^3+1909x^2+3024x-15876$. Then at $x_1=a$, and $x_2=\frac{8}{357}a^3-\frac{4}{17}a^2+\frac{145}{714} a+\frac{36}{17}$, with $\lam=-1-\frac{1909}{378}a+\frac{8}{9}a^2-\frac{8}{189}a^3$ we have $1-x_1x_2<0$, $x_1^2+x_2^2=9$ and $\grad f(x_1,x_2)=\lam \grad g_2(x_1,x_2)$; that is first order optimality holds. As the objective function and constraint set are convex, this implies optimality.  The corresponding optimal value is $\fopt\approx 84.6710$. Approximate values of $(x_1,x_2)= (2.6390,1.4267)$ and $\lam\approx -8.9150$.
\end{rem}

We test \DFOCM~ on each of these three test problems using two options for
creating the Bregman distance.  In the results of these test problems we
shall use $\omega_1=\frac{1}{2}\norm{\cdot}^2$,  and $\omega_2(x)$ to
denote the (negative) entropy 
$\sum_{i=1}^{m}(x_i) \ln (x_i)$. In Table~\ref{table:T1} we compare our results of the first three test problems to the results obtained by the Pattern Search method and Simplex Search method introduced in \cite{correia-matias-mestre-serodio-2009}. Note that, although in test problems 2 and 3 the constraint functions are non convex, the generated constraint set is convex.  This is not covered by Theorem~\ref{t:conv}, however; the \DFOCM~  still gives a good fit. 

\begin{table}[ht]
\caption{Comparing results for Test Problems 1, 2, and 3.}\label{tab:academictests}
\centering 
\begin{tabular}{c l c c c c} 
\hline\hline 

Test Problem&    Results & \multicolumn {2}{c}{DFO CoMirror }& Pattern Search  &Simplex Search \\ [0.5ex] 
 & &\small{$\omega_1(x)$  }& \small{$\omega_2(x)$}& Algorithm\cite{correia-matias-mestre-serodio-2009} & Algorithm\cite{correia-matias-mestre-serodio-2009}\\
\hline 
1 &Function value &   -0.9542 &  -0.9645 & -1& -1 \\
& $f$ evaluations   &  78  &  99   & 195&158 \\
&$g$ evaluations   & 162&  141 & 157 & 129 \\
\hline 
2&Function value &7.5587 &7.5580 & 7.625& 7.625 \\
&$f$ evaluations  & 78 & 81 & 138 &146  \\
&$g$ evaluations   & 122& 111& 138 & 118 \\
\hline
3&Function value & 84.7096 &   84.7108& 85.6610& 85.6200 \\
& $f$ evaluations   & 78 & 75 & 154 &198  \\
&$g$ evaluations   & 122& 125& 154 & 153 \\
\hline 
\end{tabular}
\label{table:T1} 
\end{table}

Examining Table~\ref{table:T1}, we note that \DFOCM~ outperformed both the Pattern Search and Simplex Search algorithms on Test Problems 2 and 3.  On Test Problem 1, \DFOCM~ did not preform as well, but still required noticeably less function evaluations that the Pattern Search and Simplex Search methods. 

\subsection{Simulation Test Problem}
In this section we test the algorithm on 12-dimensional simulated maximization problem given in \cite{hare-2010}. We used the same staring points given in \cite{hare-2010}: $x_0=(1,0,\ldots,0)$ and $\bar{x}_0=(2,0.5,\ldots,0.5)$ are vectors in $\RR^{12}$. The results are reported in Table~\ref{table:T8}. We compare our results to the results obtained from the Direct Pattern Search Method (DPS) and the Direct Random Search Method with Simulated Annealing (DRS+SA) in \cite{hare-2010}. As the constraint set for this problem is a system of linear inequalities, the methods used in \cite{hare-2010} used exact gradients when dealing with constraints.  Objective function evaluations are provided via deterministic simulation.

The results in \cite{hare-2010} report that, using $3000$ function calls, the DPS gives an optimal value of $0.8327$ with $x_0$ as starting point and an optimal value of $0.1747$ with $\bar{x}_0$ as starting point.  Whereas, using $3000$ function calls, the heuristic DRS+SA gives an optimal value of $0.9628$ with $x_0$ as starting point and an optimal value of $0.9671$ with ${\bar{x}}_0$ as starting point. 

\begin{table}[ht]
\caption{Results of DFO CoMirror algorithm } 

\centering 
\begin{tabular}{l | c c c | c  c c } 
\hline\hline 
 & \multicolumn{3}{c}{Starting point $x_0$}& \multicolumn{3}{c}{Starting point $\bar{x}_0$} \\
f calls & $\eps=0.01$ & $\eps=0.005$ & $\eps=0.001$ & $\eps=0.01$ & $\eps=0.005$ & $\eps=0.001$ \\ [0.5ex] 
\hline 
100 		& 0.7329&0			&   0		& 0.8875     &0						&   0.8968		 \\
500 &	 0.9400&	0.9387&   0.9342  &	 0.9220			&	0.9210		&   0.8332  \\
1000 
      &0.9452&0.9514 &   0.9447 &0.9277			&0.9256 			&   0.9334\\
3000
 & 0.9547&    0.9551&  0.9546  &  0.9500			&    0.9467		&  0.9538 \\
 \hline 
\end{tabular}
\label{table:T8} 
\end{table}

In Table~\ref{table:T8} we see that with 500 function calls, \DFOCM~ is able to achieve a significantly better fit than the DPS.  While the fit for \DFOCM~ never quite achieves the quality of the DRS+SA method, it comes quite close after 3000 function calls.  This difference could be explained by the fact that the DRS+SA method employs heuristics to break free of local minimizers.

\section{Conclusion}\label{s:conc}
In this paper we developed the convergence analysis required to generate a
derivative-free comirror algorithm, \DFOCM.  Furthermore, we provided some
numerical results from the implementation of the algorithm in MATLAB.  One
natural line of future research is to adapt the algorithm to deal with the
problem 
\begin{equation}\label{P0}
(P1): \min\{f(x):g(x) \le 0\},
\end{equation}
i.e., $X=\RR^m$, and to prove convergence. 
Another line of future research is examining the convergence in the case
where $g$ is not necessarily convex, but the constraint set remains convex. 
Results from test problems 2 and 3 suggest that this is possible. 

\appendix
\section{Appendix}\label{s:app}

 \begin{lem} \label{inequalities}
 For any integer $n\in \stb{4,5,\dots}$ the following inequalities hold true
 \begin{equation}\label{inequality1}\dss_{k=\left \lfloor n/2 \right \rfloor}^n \frac{1}{k} \le 2 \ln(2),\end{equation}
 \begin{equation}\label{inequality2} \dss_{k=\left \lfloor n/2 \right \rfloor}^n  \frac{1}{\sqrt{k}} \ge (2-\sqrt{2})\sqrt{n}.\end{equation}
 \end{lem}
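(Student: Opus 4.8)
The plan is to establish both inequalities by the standard device of comparing the partial sums with integrals of the monotone functions $t\mapsto 1/t$ and $t\mapsto 1/\sqrt t$, supplemented by an explicit check of the two smallest values of $n$ in the first inequality.

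For \eqref{inequality1}, since $t\mapsto 1/t$ is decreasing, for every integer $k\ge 2$ we have $1/k\le \int_{k-1}^{k}dt/t$; this applies to every index occurring in the sum because $n\ge 4$ forces $\lfloor n/2\rfloor\ge 2$. Summing over $k\in\{\lfloor n/2\rfloor,\dots,n\}$ and telescoping gives
\[
\sum_{k=\lfloor n/2\rfloor}^{n}\frac1k\ \le\ \int_{\lfloor n/2\rfloor-1}^{n}\frac{dt}{t}\ =\ \ln\frac{n}{\lfloor n/2\rfloor-1}.
\]
Using the elementary bound $\lfloor n/2\rfloor-1\ge (n-3)/2$ we majorize the right-hand side by $\ln\frac{2n}{n-3}$, and since $\frac{2n}{n-3}\le 4$ whenever $n\ge 6$, the inequality follows for all $n\ge 6$. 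The two remaining cases are handled by direct computation: $\sum_{k=2}^{4}1/k=13/12$ and $\sum_{k=2}^{5}1/k=77/60$, both of which are less than $4/3$, which in turn is less than $2\ln 2$ because $\ln 2>2/3$.

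For \eqref{inequality2}, since $t\mapsto 1/\sqrt t$ is decreasing, for every integer $k\ge 1$ we have $1/\sqrt k\ge \int_{k}^{k+1}dt/\sqrt t$. Summing over $k\in\{\lfloor n/2\rfloor,\dots,n\}$ and telescoping gives
\[
\sum_{k=\lfloor n/2\rfloor}^{n}\frac{1}{\sqrt k}\ \ge\ \int_{\lfloor n/2\rfloor}^{n+1}\frac{dt}{\sqrt t}\ =\ 2\sqrt{n+1}-2\sqrt{\lfloor n/2\rfloor}\ \ge\ 2\sqrt n-2\sqrt{n/2}\ =\ (2-\sqrt{2})\sqrt n,
\]
where the last step uses $\sqrt{n+1}\ge\sqrt n$ and $\lfloor n/2\rfloor\le n/2$; here no case analysis is needed.

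I do not anticipate any genuine obstacle, since the whole argument is elementary. The only mild annoyance is that in \eqref{inequality1} the integral estimate is exactly tight at $n=4$ and slightly too weak at $n=5$, so the clean asymptotic bound must be closed off with a direct check of these two small values, exactly as indicated above. Everything else is routine monotone-comparison bookkeeping.
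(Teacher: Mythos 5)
Your proof is correct and follows essentially the same integral-comparison route as the paper: the identical telescoping bound $\sum_{k=\lfloor n/2\rfloor}^{n}1/k\le\ln\bigl(n/(\lfloor n/2\rfloor-1)\bigr)$ for the first sum and the identical lower integral $\int_{\lfloor n/2\rfloor}^{n+1}x^{-1/2}\,dx\ge\int_{n/2}^{n}x^{-1/2}\,dx$ for the second. The one place you diverge --- disposing of $n=4$ and $n=5$ by evaluating the sums directly rather than by checking the ratio $n/(\lfloor n/2\rfloor-1)\le 4$ --- is actually necessary and an improvement: at $n=5$ that ratio equals $5$, so the integral bound alone only yields $\ln 5>2\ln 2$ (the paper's claim that the ratio is at most $4$ at $n=5$ is a slip), and your direct computation $\sum_{k=2}^{5}1/k=77/60<2\ln 2$ is what genuinely closes that case.
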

\begin{proof} To see inequality \eqref{inequality1}, notice
\begin{align}\nonumber
\dss_{k=\left \lfloor n/2 \right \rfloor}^n \frac{1}{k} &\le \dss_{k=\left \lfloor n/2 \right \rfloor-1}^{n-1}   \ds\int_{k}^{k+1} \frac{1}{x}dx\\ \nonumber
&= \ds \int_{\left \lfloor n/2 \right \rfloor-1}^{n}\dsf{1}{x}dx\\ 
&=\ln \bk{\dsf{n}{\left \lfloor n/2 \right \rfloor-1}}. \label{a:a}
\end{align}
 We now consider two cases ($n$ is even and $n$ is odd).
Case I: suppose $n=2m$ with $m\in \stb{1,2,\dots}$. Then 
\begin{equation}\label{A.3}
	{\dsf{n}{\left \lfloor n/2 \right \rfloor-1}}\le 4 \iff \ds\frac{2m}{m-1}\le 4 \iff n=2m\ge 4.
\end{equation}
Case II:  suppose $n=2m+1$ with $m\in \stb{1,2,\dots}$. Then 
\begin{equation}\label{A.4}
{\dsf{n}{\left \lfloor n/2 \right \rfloor-1}}\le 4\iff\ds\frac{2m+1}{m-1}\le 4\iff n=2m+1\ge 7.
\end{equation}
 Moreover, for $n=5$ direct computation shows that $	{\dsf{n}{\left \lfloor n/2 \right \rfloor-1}}\le 4$, which together with \eqref{a:a}, \eqref{A.3} and \eqref{A.4} proves the first inequality for all $n\in \stb{2,3,\dots}$.

Finally, 
\begin{align*}
\dss_{k=\left \lfloor \frac{n}{2}\right\rfloor}^{n} \frac{1}{\sqrt{k}}  
&=\dss_{k=\left \lfloor  \frac{n}{2}\right\rfloor}^{n} \frac{1}{\sqrt{k}}(k+1-k)
\ge \dss_{k=\left \lfloor  \frac{n}{2}\right\rfloor}^{n} \ds\int_{k}^{k+1}\frac{1}{\sqrt{x}}dx
=\ds\int_{\left \lfloor  \frac{n}{2}\right\rfloor}^{n+1}\frac{1}{\sqrt{x}}dx\\
&\ge\ds\int_{\frac{n}{2}}^{n}\frac{1}{\sqrt{x}} dx= (2-\sqrt{2})\sqrt{n}.
\end{align*}
which proves inequality \eqref{inequality2}
\end{proof}


\section*{Acknowledgments}
HHB was partially supported by the Natural Sciences and
Engineering Research Council of Canada and by the Canada Research Chair
Program.
WLH was partially
supported by the Natural Sciences and Engineering Research Council
of Canada and UBC Internal Research Funding.
WMM was partially supported
by the Natural Sciences and Engineering Research Council
of Canada and UBC Internal Research Funding.

\bibliographystyle{plain}

\end{document}